%
\documentclass[12pt, reqno]{amsart}
\usepackage{amsmath, amsthm, amscd, amsfonts, amssymb, graphicx, color}
\usepackage[bookmarksnumbered, colorlinks, plainpages]{hyperref}
\hypersetup{colorlinks=true,linkcolor=red, anchorcolor=green, citecolor=cyan, urlcolor=red, filecolor=magenta, pdftoolbar=true}

\textheight 22.5truecm \textwidth 14.5truecm
\setlength{\oddsidemargin}{0.35in}\setlength{\evensidemargin}{0.35in}

\setlength{\topmargin}{-.5cm}

\newtheorem{theorem}{Theorem}[section]
\newtheorem{lemma}[theorem]{Lemma}

\newtheorem{corollary}[theorem]{Corollary}
\theoremstyle{definition}
\newtheorem{definition}[theorem]{Definition}

\theoremstyle{remark}

\numberwithin{equation}{section}

\begin{document}

\setcounter{page}{1}

\title[$A$-Birkhoff--James orthogonality of operators]
{Birkhoff--James orthogonality of operators in semi-Hilbertian spaces and its applications}

\author[A. Zamani]{Ali Zamani}

\address{Department of Mathematics, Farhangian University, Tehran, Iran.}
\email{\textcolor[rgb]{0.00,0.00,0.84}{zamani.ali85@yahoo.com}}


\let\thefootnote\relax\footnote{Copyright 2018 by the Tusi Mathematical Research Group.}

\subjclass[2010]{Primary 46C05; Secondary 47B65, 47L05.}

\keywords{Positive operator, semi-inner product,
$A$-Birkhoff--James orthogonality, and $A$-distance formulas.}

\date{Received: xxxxxx; Revised: yyyyyy; Accepted: zzzzzz.}

\begin{abstract}
In this paper, the concept of Birkhoff--James orthogonality of operators
on a Hilbert space is generalized when a semi-inner product is considered.
More precisely, for linear operators $T$ and $S$ on a complex Hilbert space $\mathcal{H}$,
a new relation  $T\perp^B_A S$ is defined if $T$ and $S$
are bounded with respect to the seminorm induced by a positive
operator $A$ satisfying ${\|T + \gamma S\|}_A\geq {\|T\|}_A$ for all $\gamma \in \mathbb{C}$.
We extend a theorem due to R. Bhatia and P. \v{S}emrl, by proving that $T\perp^B_A S$
if and only if there exists a sequence of $A$-unit vectors $\{x_n\}$ in $\mathcal{H}$ such that
$\displaystyle{\lim_{n\rightarrow +\infty}}{\|Tx_n\|}_A = {\|T\|}_A$
and $\displaystyle{\lim_{n\rightarrow +\infty}}{\langle Tx_n, Sx_n\rangle}_A = 0$.
In addition, we give some $A$-distance formulas. Particularly, we prove
\begin{align*}
\displaystyle{\inf_{\gamma \in \mathbb{C}}}{\|T + \gamma S\|}_{A} =
\sup\Big\{|{\langle Tx, y\rangle}_A|; \,  {\|x\|}_{A} = {\|y\|}_{A} = 1, \, {\langle Sx, y\rangle}_A = 0\Big\}.
\end{align*}
Some other related results are also discussed.
\end{abstract} \maketitle

\section{\textbf{Introduction and preliminaries}}
Let $\mathbb{B}(\mathcal{H})$ denote the $C^{\ast}$-algebra of all bounded
linear operators on a complex Hilbert space $\mathcal{H}$ with an inner
product $\langle \cdot,\cdot \rangle$ and the corresponding norm
$\|\cdot\| $. The symbol $I$ stands for the identity operator
on $\mathcal{H}$.
If $T\in\mathbb{B}(\mathcal{H})$, then we
denote by $\mathcal{R}(T)$ and $\mathcal{N}(T)$ the range and
the kernel of $T$, respectively, and by $\overline{\mathcal{R}(T)}$
the norm closure of $\mathcal{R}(T)$.
Throughout this article, we assume that $A\in\mathbb{B}(\mathcal{H})$ is a positive operator
and that $P$ is the orthogonal projection onto $\overline{\mathcal{R}(A)}$.
Recall that $A$ is called positive if $\langle Ax, x\rangle\geq 0$ for all $x\in\mathcal{H}$.
Such an $A$ induces a positive semidefinite sesquilinear
form ${\langle \cdot, \cdot\rangle}_A: \,\mathcal{H}\times \mathcal{H} \rightarrow \mathbb{C}$
defined by
\begin{align*}
{\langle x, y\rangle}_A = \langle Ax, y\rangle, \qquad x, y\in\mathcal{H}.
\end{align*}
Denote by ${\|\cdot\|}_A$ the seminorm induced by ${\langle \cdot, \cdot\rangle}_A$, that is,
${\|x\|}_A = \sqrt{{\langle x, x\rangle}_A}$
for every $x\in\mathcal{H}$.
It can be easily seen that ${\|\cdot\|}_A$ is a norm if and only if $A$ is an injective operator,
and  that $(\mathcal{H}, {\|\cdot\|}_A)$ is a complete space if and only if $\mathcal{R}(A)$ is closed in $\mathcal{H}$.
For $x, y\in\mathcal{H}$, we say that $x$ and $y$ are $A$-orthogonal, denoted by $x \perp_A y$,
if ${\langle x, y\rangle}_A = 0$. Note that this definition is a natural extension of
the usual notion of orthogonality, which represents the $I$-orthogonality case.
Furthermore, we put
\begin{align*}
\mathbb{B}_{A^{1/2}}(\mathcal{H}) =\Big\{T\in \mathbb{B}(\mathcal{H}):
\, \exists \, c> 0\, \forall x\in\mathcal{H}; {\|Tx\|}_A \leq c{\|x\|}_A\Big\}.
\end{align*}
We say that an operator $T\in \mathbb{B}(\mathcal{H})$ is $A$-bounded
if $T$ belongs to $\mathbb{B}_{A^{1/2}}(\mathcal{H})$.
It can be shown that $\mathbb{B}_{A^{1/2}}(\mathcal{H})$ is a unital
subalgebra of $\mathbb{B}(\mathcal{H})$ which, in general,
is neither closed nor dense in $\mathbb{B}(\mathcal{H})$ (see \cite{Ar.Co.Go.1}).
We equip $\mathbb{B}_{A^{1/2}}(\mathcal{H})$ with
the seminorm ${\|\cdot\|}_A$ defined as follows:
\begin{align*}
{\|T\|}_A = \displaystyle{\sup_{x \in \overline{\mathcal{R}(A)}, x\neq 0}} \frac{{\|Tx\|}_A}{{\|x\|}_A}
= \inf\Big\{c>0; \, {\|Tx\|}_A \leq c{\|x\|}_A, x\in \mathcal{H}\Big\}< \infty.
\end{align*}
In addition, for $T\in \mathbb{B}_{A^{1/2}}(\mathcal{H})$, we have
\begin{align*}
{\|T\|}_A = \displaystyle{\sup_{x\in \mathcal{H}, {\|x\|}_A = 1}} {\|Tx\|}_A
= \sup\Big\{|{\langle Tx, y\rangle}_A|; \, x, y\in \mathcal{H}, {\|x\|}_A = {\|y\|}_A = 1\Big\}.
\end{align*}
Of course, many difficulties arise. For instance, it may happen
that ${\|T\|}_A = \infty$ for some $T\in \mathbb{B}(\mathcal{H})$.
In addition, not any operator admits an adjoint operator for the
semi-inner product ${\langle \cdot, \cdot\rangle}_A$.
For more details about this class of operators, we refer the reader to \cite{Ar.Co.Go.1}.
In recent years, several results covering some classes of operators
on a complex Hilbert space $\big(\mathcal{H}, \langle \cdot, \cdot\rangle\big)$
are extended to $\big(\mathcal{H}, {\langle \cdot, \cdot\rangle}_A\big)$; see \cite{Ar.Co.Go.1, Ba.Ka.Ah} and their references.

The notion of orthogonality in $\mathbb{B}(\mathcal{H})$ can be introduced
in many ways ( see, e.g., \cite{M.Z}).
When $T, S\in \mathbb{B}(\mathcal{H})$, we say that $T$ is
Birkhoff--James orthogonal to $S$, denoted $T\perp^B S$, if
\begin{align*}
\|T + \gamma S\|\geq \|T\|\quad \mbox{for all} \,\, \gamma \in \mathbb{C}.
\end{align*}
In Hilbert spaces, this orthogonality
is equivalent to the usual notion of orthogonality.
This notion of orthogonality plays
a very important role in the geometry of Hilbert space operators.
For $T, S\in \mathbb{B}(\mathcal{H})$, Bhatia and \v{S}emrl in \cite[Remark 3.1]{B.S}
and Paul in \cite[Lemma 2]{Pa} independently proved that $T\perp^B S$ if and only if there exists
a sequence of unit vectors $\{x_n\}$ in $\mathcal{H}$ such that
\begin{align*}
\lim_{n\rightarrow\infty} \|Tx_n\| = \|T\| \quad
\mbox{and} \quad \lim_{n\rightarrow\infty}\langle Tx_n, Sx_n\rangle = 0.
\end{align*}
It follows then that if the Hilbert space $\mathcal{H}$ is finite-dimensional,
$T\perp^B S$ if and only if there is a unit vector $x\in\mathcal{H}$ such that
$\|Tx\| = \|T\|$ and $\langle Tx, Sx\rangle = 0$.

Recently, some authors extended the well known result of Bhatia--\v{S}emrl (see \cite{Ch.St.Wo, Wo.3, Wo.1}).
Moreover, the papers \cite{Wo.3} and \cite{Wo.1} show another ways to obtain the Bhatia--\v{S}emrl theorem.
Some other authors studied different aspects of orthogonality of
operators on various Banach spaces and elements of an arbitrary Hilbert $C^*$-module;
 see, for instance,\cite{A.R, B.C.M.W.Z, Ch.Wo, G.S.P, Ke, P.S.M.M, Wo.2, M.Z.2}.

Now, let us introduce the notion of $A$-Birkhoff--James orthogonality of operators in semi-Hilbertian spaces.
\begin{definition}\label{de.31}
An element $T \in \mathbb{B}_{A^{1/2}}(\mathcal{H})$ is called an $A$-Birkhoff--James orthogonal
to another element $S \in \mathbb{B}_{A^{1/2}}(\mathcal{H})$, denoted by $T\perp^B_A S$, if
\begin{align*}
{\|T + \gamma S\|}_A\geq {\|T\|}_A \quad \mbox{for all} \,\, \gamma \in \mathbb{C}.
\end{align*}
\end{definition}
It is a generalization of the notion of Birkhoff--James of Hilbert space operators.
Notice that the $A$-Birkhoff--James orthogonality is homogenous,
that is, $T\perp^B_A S\, \Leftrightarrow \, (\alpha T)\perp^B_A (\beta S)$ for all $\alpha, \beta\in\mathbb{C}$.

The paper is organized as follows:
In the next section, we obtain some characterizations of $A$-Birkhoff--James orthogonality for bounded
linear operators in semi-Hilbertian spaces.
In particular, for $T, S \in \mathbb{B}_{A^{1/2}}(\mathcal{H})$, we show that $T\perp^B_A S$
if and only if there exists a sequence of $A$-unit vectors $\{x_n\}$ in $\mathcal{H}$ such that
\begin{align*}
\displaystyle{\lim_{n\rightarrow +\infty}}{\|Tx_n\|}_A = {\|T\|}_A
\quad \mbox{and} \quad \displaystyle{\lim_{n\rightarrow +\infty}}{\langle Tx_n, Sx_n\rangle}_A = 0.
\end{align*}
Furthermore, for the finite-dimensional Hilbert space $\mathcal{H}$, we show that $T\perp^B_A S$
if and only if there exists an $A$-unit vector $x\in \mathcal{H}$ such that
${\|Tx\|}_A = {\|T\|}_A$ and ${\langle Tx, Sx\rangle}_A = 0$.
The mentioned property extends the Bhatia--\v{S}emrl theorem.

In the last section, some formulas for the $A$-distance of an operator to the
class of multiple scalars of another one in semi-Hilbertian spaces are given.
In particular, we show that
\begin{align*}
\displaystyle{\inf_{\gamma \in \mathbb{C}}}{\|T + \gamma S\|}_{A} =
\sup\Big\{|{\langle Tx, y\rangle}_A|; \,  {\|x\|}_{A} = {\|y\|}_{A} = 1, \, Sx \perp_A y\Big\}.
\end{align*}
We then apply it to prove that
$\displaystyle{\inf_{\gamma \in \mathbb{C}}}{\|T + \gamma S\|}^2_{A}=
\displaystyle{\sup_{{\|x\|}_A = 1}}\Phi^{(T, S)}_A(x)$,
where
\begin{align*}
\Phi^{(T, S)}_A(x)=\begin{cases}
{\|Tx\|}_A^2 - \frac{|{\langle Tx, Sx\rangle}_A|^2}{{\|Sx\|}_A^2} &\text{if\, ${\|Sx\|}_A\neq0$,}\\
{\|Tx\|}_A^2       &\text{if\, ${\|Sx\|}_A=0$}.
\end{cases}
\end{align*}
Our results cover and extend the works of Fujii and Nakamoto in \cite{Fu.Na} and Bhatia and \v{S}emrl in \cite{B.S}.
\section{$A$-Birkhoff--James orthogonality of operators}
We first prove a technical lemma that we need in what follows.
We use some techniques of \cite[Theorem 3.2]{Ba.Ka.Ah} to prove this result.
In fact, the following lemma extends Magajna's theorem \cite{Mag}.
\begin{lemma}\label{L.2.1}
Let $T, S\in\mathbb{B}_{A^{1/2}}(\mathcal{H})$. Then the set
\begin{align*}
W_A(T, S) = \Big\{\xi\in \mathbb{C}; \, \exists \,\{x_n\}\subset \mathcal{H}, \,{\|x_n\|}_A = 1,
\, &\displaystyle{\lim_{n\rightarrow +\infty}}{\|Tx_n\|}_A = {\|T\|}_A,
\\&\qquad \qquad \mbox{and}\, \displaystyle{\lim_{n\rightarrow +\infty}}{\langle Tx_n, Sx_n\rangle}_A = \xi\Big\}
\end{align*}
is nonempty, compact, and convex.
\end{lemma}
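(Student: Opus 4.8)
The plan is to establish the three properties—nonemptiness, compactness, and convexity—in that order, modeling the argument on the classical Bhatia–Šemrl/Magajna technique but working throughout with the $A$-seminorm and the semi-inner product. For \textbf{nonemptiness}, I would first invoke the formula ${\|T\|}_A = \sup_{{\|x\|}_A = 1}{\|Tx\|}_A$ to produce a sequence of $A$-unit vectors $\{y_n\}$ with ${\|Ty_n\|}_A \to {\|T\|}_A$. Along this sequence the scalars ${\langle Ty_n, Sy_n\rangle}_A$ are bounded (by the Cauchy–Schwarz inequality for ${\langle\cdot,\cdot\rangle}_A$, $|{\langle Ty_n, Sy_n\rangle}_A| \le {\|Ty_n\|}_A{\|Sy_n\|}_A \le {\|T\|}_A{\|S\|}_A$), so by passing to a subsequence the scalars converge to some $\xi \in \mathbb{C}$; that $\xi$ witnesses $W_A(T,S)\neq\emptyset$.

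For \textbf{compactness}, boundedness of $W_A(T,S)$ follows from the same Cauchy–Schwarz estimate: every element is bounded in modulus by ${\|T\|}_A{\|S\|}_A$. Closedness requires a diagonal argument: if $\xi_k \to \xi$ with each $\xi_k \in W_A(T,S)$, pick for each $k$ an $A$-unit vector $x_k$ with ${\|Tx_k\|}_A$ within $1/k$ of ${\|T\|}_A$ and $|{\langle Tx_k, Sx_k\rangle}_A - \xi_k|<1/k$; then $\{x_k\}$ witnesses $\xi \in W_A(T,S)$. The \textbf{convexity} is the substantive part and the main obstacle. Given $\xi_1, \xi_2 \in W_A(T,S)$ with witnessing sequences $\{x_n^{(1)}\}$ and $\{x_n^{(2)}\}$, and $t \in (0,1)$, I want to produce a sequence witnessing $t\xi_1 + (1-t)\xi_2$. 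The idea, following \cite[Theorem 3.2]{Ba.Ka.Ah} and Magajna's original argument, is to form vectors of the form $z_n = \alpha_n x_n^{(1)} + \beta_n x_{m(n)}^{(2)}$, where phases and the split between $\alpha_n,\beta_n$ (with $|\alpha_n|^2 \approx t$, $|\beta_n|^2 \approx 1-t$) are chosen so that the cross terms ${\langle Tx_n^{(1)}, Tx_{m(n)}^{(2)}\rangle}_A$ and ${\langle Tx_n^{(1)}, Sx_{m(n)}^{(2)}\rangle}_A$ contribute negligibly while the diagonal terms combine to give the desired convex combination of both ${\|T\|}_A^2$ and $\xi_1,\xi_2$.

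The delicate point—and where semi-Hilbertian subtleties enter—is controlling those cross terms. Unlike in a genuine Hilbert space, one cannot simply pass to an orthonormal-type subsequence, because ${\|\cdot\|}_A$ is only a seminorm and the relevant geometry lives on $\overline{\mathcal{R}(A)}$. The remedy is to work with the images: since ${\langle Tx, Sy\rangle}_A = \langle A^{1/2}Tx, A^{1/2}Sy\rangle$ in the genuine Hilbert space $\overline{\mathcal{R}(A^{1/2})}$ (or equivalently on $\overline{\mathcal{R}(A)}$ via $P$), the vectors $A^{1/2}Tx_n^{(1)}$ live in an honest Hilbert space, have norms ${\|Tx_n^{(1)}\|}_A \to {\|T\|}_A$, and one can extract a weakly convergent subsequence; then ${\langle Tx_n^{(1)}, Tx_{m}^{(2)}\rangle}_A$ for $m$ fixed behaves like an inner product with a weak limit, and by a careful interlacing of the two sequences (choosing $m(n)$ growing fast enough) the cross terms can be driven to a controlled limit. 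One then checks that with the right phase choices on $\alpha_n,\beta_n$ the quantities ${\|Tz_n\|}_A^2$ tend to ${\|T\|}_A^2$ and ${\langle Tz_n, Sz_n\rangle}_A$ tends to $t\xi_1+(1-t)\xi_2$, completing the proof. The bookkeeping of these phase and weight choices, together with verifying that the cross terms are asymptotically negligible, is the technical heart of the argument.
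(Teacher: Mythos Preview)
Your treatment of nonemptiness and compactness matches the paper's proof essentially line for line: a maximizing sequence plus Cauchy--Schwarz for nonemptiness, the same Cauchy--Schwarz bound for boundedness, and a diagonal extraction for closedness.

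For convexity, however, the paper takes a shorter and structurally different route. Rather than building convex-combination witnesses by hand, the paper uses the orthogonal decomposition $\mathcal{H} = \mathcal{N}(A)\oplus\overline{\mathcal{R}(A)}$ together with the fact that any $A$-bounded operator maps $\mathcal{N}(A)$ into $\mathcal{N}(A)$. This lets one replace every witnessing sequence by its $\overline{\mathcal{R}(A)}$-component without changing any of the relevant $A$-quantities, and then identifies
\[
W_A(T,S) \;=\; W_{A_0}(\widetilde{T},\widetilde{S}),
\]
where $A_0 = A\!\mid_{\overline{\mathcal{R}(A)}}$, $\widetilde{T}=PT\!\mid_{\overline{\mathcal{R}(A)}}$, $\widetilde{S}=PS\!\mid_{\overline{\mathcal{R}(A)}}$. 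On $\overline{\mathcal{R}(A)}$ the seminorm ${\|\cdot\|}_{A_0}$ is a genuine norm, and the paper then simply invokes Magajna's classical lemma \cite[Lemma~2.1]{Mag} as a black box to conclude convexity. Your $A^{1/2}$-transfer observation is essentially this same reduction in disguise, but instead of citing Magajna you propose to \emph{reprove} his lemma inside the argument via the interlacing/phase-averaging construction. That is a legitimate plan, and your identification of the cross-term control as the crux is correct; it is, however, considerably more laborious than the paper's reduce-and-cite approach, and the details you defer (normalizing $z_n$ to an $A$-unit vector, showing the phase choices simultaneously kill the several cross terms appearing in both ${\|Tz_n\|}_A^2$ and ${\langle Tz_n, Sz_n\rangle}_A$) are exactly where Magajna's proof does real work. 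The paper's route buys economy and avoids reproducing that machinery; yours would buy self-containment at the cost of reproducing it.
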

\begin{proof}
Since the seminorm of $T\in\mathbb{B}_{A^{1/2}}(\mathcal{H})$ is given by
\begin{align*}
{\|T\|}_A = \sup\{{\|Tx\|}_A; \,x\in \overline{\mathcal{R}(A)}, \, {\|x\|}_A = 1\},
\end{align*}
there exists a sequence of $A$-unit vectors $\{x_n\}$ in $\overline{\mathcal{R}(A)}$ such that
\begin{align*}
\displaystyle{\lim_{n\rightarrow +\infty}}{\|Tx_n\|}_A = {\|T\|}_A.
\end{align*}
Furthermore, using the Cauchy--Schwarz inequality, we have
\begin{align*}
|{\langle Tx_n, Sx_n\rangle}_A| \leq {\|Tx_n\|}_A{\|Sx_n\|}_A \leq {\|T\|}_A{\|S\|}_A.
\end{align*}
Hence, $\{{\langle Tx_n, Sx_n\rangle}_A\}$ is a bounded sequence of complex
numbers, so there exists a subsequence $\{{\langle Tx_{n_k}, Sx_{n_k}\rangle}_A\}$
that converges to some $\xi_0 \in \mathbb{C}$. Thus $\xi_0 \in W_A(T, S)$ and hence $W_A(T, S)$ is nonempty.

On the other hand, considering the definition of $W_A(T, S)$ follows that
\begin{align*}
W_A(T, S) \subset \big\{\xi \in \mathbb{C}; \,|\xi|\leq {\|T\|}_A{\|S\|}_A\big\}.
\end{align*}
Therefore, to prove that $W_A(T, S)$ is compact, it is enough to show that $W_A(T, S)$ is closed.
Let $\xi_n \in W_A(T, S)$ and let $\displaystyle{\lim_{n\rightarrow +\infty}}\xi_n = \xi$.
Since $\xi_n \in W_A(T, S)$,
there exists a sequence of $A$-unit vectors $\{x^n_m\}$ in $\mathcal{H}$ such that
$\displaystyle{\lim_{m\rightarrow +\infty}}{\|Tx^n_m\|}_A = {\|T\|}_A$
and
$\displaystyle{\lim_{m\rightarrow +\infty}}{\langle Tx^n_m, Sx^n_m\rangle}_A = \xi_n$.
Now, let $\varepsilon>0$. Hence
\begin{align}\label{i.2.2.1}
\big|{\|Tx^n_m\|}_A - {\|T\|}_A\big| < \varepsilon
\end{align}
and also
\begin{align}\label{i.2.2.2}
\big|{\langle Tx^n_m, Sx^n_m\rangle}_A - \xi_n\big| < \frac{\varepsilon}{2}
\end{align}
for all sufficiently large $m$.
From (\ref{i.2.2.1}) and (\ref{i.2.2.2}), we get
\begin{align*}
\big|{\|Tx^n_m\|}_A - {\|T\|}_A\big| < \varepsilon
\end{align*}
and
\begin{align*}
\big|{\langle Tx^n_m, Sx^n_m\rangle}_A - \xi\big|
\leq \big|{\langle Tx^n_m, Sx^n_m\rangle}_A - \xi_n\big| + |\xi_n - \xi|
< \frac{\varepsilon}{2} + \frac{\varepsilon}{2} = \varepsilon
\end{align*}
for all sufficiently large $m$.
Therefore we deduce that
$\displaystyle{\lim_{m\rightarrow +\infty}}{\|Tx^n_m\|}_A = {\|T\|}_A$ and
$\displaystyle{\lim_{m\rightarrow +\infty}}{\langle Tx^n_m, Sx^n_m\rangle}_A = \xi$.
Thus $\xi \in W_A(T, S)$ and so $W_A(T, S)$ is closed.

We next show that $W_A(T, S)$ is convex. Since $\mathcal{H}$ can be decomposed as
$\mathcal{H} = \mathcal{N}(A)\oplus \overline{\mathcal{R}(A)}$,
so every $x\in \mathcal{H}$ can be written in a unique way into
$x = y + z$ with $y\in \mathcal{N}(A)$ and $z\in \overline{\mathcal{R}(A)}$.
Furthermore, since $A \geq 0$, it follows that $\mathcal{N}(A)= \mathcal{N}(A^{1/2})$
which implies that ${\|x\|}_A = {\|z\|}_A$. Thus
\begin{align*}
W_A(T, S) &= \Big\{\xi\in \mathbb{C}; \, \exists \,\{(y_n, z_n)\}\subset \mathcal{N}(A)\times \overline{\mathcal{R}(A)},
\,{\|z_n\|}_A = 1, \,
\\&\displaystyle{\lim_{n\rightarrow +\infty}}{\|T(y_n + z_n)\|}_A = {\|T\|}_A, \,\mbox{and}\,
\displaystyle{\lim_{n\rightarrow +\infty}}{\langle Ty_n, Sz_n\rangle}_A + {\langle Tz_n, Sz_n\rangle}_A= \xi\Big\}.
\end{align*}
Since $T, S\in\mathbb{B}_{A^{1/2}}(\mathcal{H})$, then $T(\mathcal{N}(A))\subset \mathcal{N}(A)$
and $S(\mathcal{N}(A))\subset \mathcal{N}(A)$. Hence, we get
\begin{align*}
W_A(T, S) &= \Big\{\xi\in \mathbb{C}; \, \exists \,\{z_n\}\subset \overline{\mathcal{R}(A)},
\,{\|z_n\|}_A = 1, \,
\\& \qquad \qquad \displaystyle{\lim_{n\rightarrow +\infty}}{\|Tz_n\|}_A = {\|T\|}_A, \,\mbox{and}\,
\displaystyle{\lim_{n\rightarrow +\infty}}{\langle Tz_n, Sz_n\rangle}_A= \xi\Big\}
\\& = \Big\{\xi\in \mathbb{C}; \, \exists \,\{z_n\}\subset \overline{\mathcal{R}(A)},
\,{\|z_n\|}_A = 1, \,
\\& \qquad \qquad \displaystyle{\lim_{n\rightarrow +\infty}}{\|PTz_n\|}_A = {\|PT\mid_{\overline{\mathcal{R}(A)}}\|}_A,
\,\mbox{and}\, \displaystyle{\lim_{n\rightarrow +\infty}}{\langle PTz_n, PSz_n\rangle}_A= \xi\Big\}
\\& = W_{A_0}(\widetilde{T}, \widetilde{S}),
\end{align*}
where $A_0 = A\mid_{\overline{\mathcal{R}(A)}}$, $\widetilde{T} = PT\mid_{\overline{\mathcal{R}(A)}}$
and $\widetilde{S} = PS\mid_{\overline{\mathcal{R}(A)}}$.
By \cite[Lemma 2.1]{Mag}, we conclude that $W_A(T, S)$ is convex.
\end{proof}
Recall that the minimum modulus of $S\in\mathbb{B}(\mathcal{H})$ is defined by
\begin{align*}
m(S) = \inf \Big\{\|Sx\|: \, x\in \mathcal{H},\|x\| =1\Big\}.
\end{align*}
This concept is useful in studying linear operators (see \cite{M.Z},
and further references therein).
The $A$-minimum modulus of $S\in\mathbb{B}_{A^{1/2}}(\mathcal{H})$ can be defined by
\begin{align*}
m_A(S) =\inf\Big\{{\|Sx\|}_A: \,x\in \mathcal{H}, {\|x\|}_A = 1\Big\}.
\end{align*}
We are now in a position to establish the main result of this section.
To establish the following theorem, we use some ideas of \cite[Theorem 2]{St}.
\begin{theorem}\label{T.2.2}
Let $T, S\in\mathbb{B}_{A^{1/2}}(\mathcal{H})$. Then the following conditions are equivalent:
\begin{itemize}
\item[(i)] There exists a sequence of $A$-unit vectors $\{x_n\}$ in $\mathcal{H}$ such that
\begin{align*}
\displaystyle{\lim_{n\rightarrow +\infty}}{\|Tx_n\|}_A = {\|T\|}_A
\quad \mbox{and} \quad \displaystyle{\lim_{n\rightarrow +\infty}}{\langle Tx_n, Sx_n\rangle}_A = 0.
\end{align*}
\item[(ii)] ${\|T + \gamma S\|}^2_{A} \geq {\|T\|}^2_{A} + |\gamma|^2 m^2_A(S)$ for all $\gamma \in \mathbb{C}$.
\item[(iii)] $T\perp^B_A S$.
\end{itemize}
\end{theorem}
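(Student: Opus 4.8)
The plan is to establish the cycle $(i)\Rightarrow(ii)\Rightarrow(iii)\Rightarrow(i)$, using Lemma~\ref{L.2.1} decisively in the last implication.

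For $(i)\Rightarrow(ii)$, I would take the $A$-unit sequence $\{x_n\}$ supplied by (i) and, for a fixed $\gamma\in\mathbb{C}$, write
\[
{\|T+\gamma S\|}_A^2 \ge {\|(T+\gamma S)x_n\|}_A^2
= {\|Tx_n\|}_A^2 + 2\,\mathrm{Re}\big(\gamma{\langle Sx_n,Tx_n\rangle}_A\big) + |\gamma|^2{\|Sx_n\|}_A^2 .
\]
Letting $n\to\infty$: ${\|Tx_n\|}_A^2\to{\|T\|}_A^2$; the middle term tends to $0$ since ${\langle Sx_n,Tx_n\rangle}_A=\overline{{\langle Tx_n,Sx_n\rangle}_A}\to0$; and ${\|Sx_n\|}_A\ge m_A(S)$ for all $n$. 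Hence the right-hand side has lower limit at least ${\|T\|}_A^2+|\gamma|^2 m_A^2(S)$, which is (ii). The step $(ii)\Rightarrow(iii)$ is immediate, since $m_A^2(S)\ge0$ gives ${\|T+\gamma S\|}_A^2\ge{\|T\|}_A^2$ for every $\gamma$.

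The core of the argument is $(iii)\Rightarrow(i)$, which I would prove in contrapositive form: assuming $0\notin W_A(T,S)$, I exhibit $\gamma\in\mathbb{C}$ with ${\|T+\gamma S\|}_A<{\|T\|}_A$, contradicting (iii). (If $S=0$, or if $A=0$, the statement is degenerate, so assume $S\neq0$ and that $A$-unit vectors exist.) By Lemma~\ref{L.2.1}, $W_A(T,S)$ is compact and convex, so, missing the origin, it is strictly separated from $0$ by a real-linear functional on $\mathbb{C}$: there are $u\in\mathbb{C}$ with $|u|=1$ and $c>0$ such that $\mathrm{Re}(\bar u\,\xi)\ge c$ for all $\xi\in W_A(T,S)$. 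The next step is to upgrade this into a uniform estimate over near-norming vectors, namely: there is $\varepsilon_0>0$ such that ${\|x\|}_A=1$ and ${\|Tx\|}_A^2>{\|T\|}_A^2-\varepsilon_0$ imply $\mathrm{Re}\big(\bar u\,{\langle Tx,Sx\rangle}_A\big)\ge c/2$. Indeed, if this failed, for each $n$ there would be an $A$-unit vector $x_n$ with ${\|Tx_n\|}_A\to{\|T\|}_A$ and $\mathrm{Re}\big(\bar u\,{\langle Tx_n,Sx_n\rangle}_A\big)<c/2$; since $\{{\langle Tx_n,Sx_n\rangle}_A\}$ is bounded by Cauchy--Schwarz, a subsequential limit would lie in $W_A(T,S)$ yet violate the separation.

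To finish, I would put $\gamma=-tu$ with $t>0$ small and bound ${\|(T+\gamma S)x\|}_A^2={\|Tx\|}_A^2-2t\,\mathrm{Re}\big(u\,{\langle Sx,Tx\rangle}_A\big)+t^2{\|Sx\|}_A^2$ over all $A$-unit vectors $x$ by splitting into two cases. When ${\|Tx\|}_A^2>{\|T\|}_A^2-\varepsilon_0$, the uniform estimate gives the bound ${\|T\|}_A^2-tc+t^2{\|S\|}_A^2$; when ${\|Tx\|}_A^2\le{\|T\|}_A^2-\varepsilon_0$, the crude bounds $|{\langle Sx,Tx\rangle}_A|\le{\|S\|}_A{\|T\|}_A$ and ${\|Sx\|}_A\le{\|S\|}_A$ give the bound ${\|T\|}_A^2-\varepsilon_0+2t{\|S\|}_A{\|T\|}_A+t^2{\|S\|}_A^2$. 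For $t>0$ small enough both bounds are strictly less than ${\|T\|}_A^2$; taking $M$ to be their maximum yields ${\|(T+\gamma S)x\|}_A^2\le M<{\|T\|}_A^2$ for every $A$-unit $x$, hence ${\|T+\gamma S\|}_A^2\le M<{\|T\|}_A^2$, the desired contradiction. I expect the main obstacle to be precisely this chain: converting the plane-geometry separation of $0$ from the convex set $W_A(T,S)$ into a genuine decrease of the seminorm, where the compactness of $W_A(T,S)$ is needed to obtain a single constant $M<{\|T\|}_A^2$ rather than merely a pointwise strict inequality, and where the convexity of $W_A(T,S)$ is needed even to produce the separating functional in the first place.
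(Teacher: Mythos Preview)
Your proof is correct and shares the paper's overall strategy: the cycle $(i)\Rightarrow(ii)\Rightarrow(iii)\Rightarrow(i)$, with the hard step $(iii)\Rightarrow(i)$ handled by contrapositive---assume $0\notin W_A(T,S)$, invoke Lemma~\ref{L.2.1} to separate $0$ from this compact convex set, and then exhibit a $\gamma$ with ${\|T+\gamma S\|}_A<{\|T\|}_A$.

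The executions of that last step differ. The paper rotates so that $W_A(T,S)$ lies in the open right half-plane, splits the $A$-unit sphere according to whether $\mathrm{Re}\,{\langle Tx,Sx\rangle}_A\le\tau/2$, shows that vectors in this set satisfy ${\|Tx\|}_A\le\delta<{\|T\|}_A$, and in the complementary case decomposes $Tx=(r+it)Sx+y$ with $Sx\perp_A y$ to compute ${\|(T+\gamma_0 S)x\|}_A^2$ exactly for one explicit $\gamma_0$. You instead split according to whether $x$ is near-norming for $T$, obtain the uniform bound $\mathrm{Re}(\bar u\,{\langle Tx,Sx\rangle}_A)\ge c/2$ on near-norming vectors by a subsequence argument, and take $\gamma=-tu$ with $t>0$ small. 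Your route avoids the orthogonal decomposition (Cauchy--Schwarz suffices in the non-norming case) and makes the interplay of the two constants $\varepsilon_0,c$ very transparent; the paper's version, by contrast, produces an explicit $\gamma_0$. One cosmetic point: the degenerate case to set aside is ${\|S\|}_A=0$ rather than $S=0$, since (i) is then immediate by Cauchy--Schwarz, exactly as the paper notes.
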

\begin{proof}
(i)$\Rightarrow$(ii) Suppose that (i) holds.
We have
\begin{align*}
{\|T + \gamma S\|}^2_A &\geq {\|(T + \gamma S)x_n\|}^2_A
\\& = {\|Tx_n\|}^2_A + \overline{\gamma}{\langle Tx_n, Sx_n\rangle}_A
+ \gamma {\langle Sx_n, Tx_n\rangle}_A + |\gamma|^2{\|Sx_n\|}^2_A
\end{align*}
for all $\gamma\in\mathbb{C}$ and $n\in\mathbb{N}$. Thus
\begin{align*}
{\|T + \gamma S\|}^2_A \geq {\|T\|}^2_A + |\gamma|^2\lim_{n\rightarrow\infty}\sup {\|Sx_n\|}^2_A
\geq {\|T\|}^2_A + |\gamma|^2m^2_A(S)
\end{align*}
for all $\gamma \in \mathbb{C}$.

(ii)$\Rightarrow$(iii) This implication is trivial.

(iii)$\Rightarrow$(i)
If ${\|S\|}_A = 0$, then since $T$ is a seminorm,
there exists a sequence of $A$-unit vectors $\{x_n\}$ in $\mathcal{H}$ such that
$\displaystyle{\lim_{n\rightarrow +\infty}}{\|Tx_n\|}_A = {\|T\|}_A$.
So, the Cauchy--Schwarz inequality implies
\begin{align*}
|{\langle Tx_n, Sx_n\rangle}_A| \leq {\|Tx_n\|}_A{\|Sx_n\|}_A \leq {\|T\|}_A{\|S\|}_A = 0.
\end{align*}
Hence, $\displaystyle{\lim_{n\rightarrow +\infty}}{\langle Tx_n, Sx_n\rangle}_A = 0$.
Now, let ${\|S\|}_A \neq 0$.
It is enough to show that $0\in W_A(T, S)$,
where $W_A(T, S)$ is defined in Lemma \ref{L.2.1}.
let $0\notin W_A(T, S)$.  Lemma \ref{L.2.1} implies that  $W_A(T, S)$ is
a nonempty compact and convex subset of the complex plane $\mathbb{C}$; hence
because of the rotation, we may suppose that $W_A(T, S)$ is contained in the
right half-plane. Therefore there is a line that separates $0$ from $W_A(T, S)$.
In other words, there exists $\tau > 0$ such that $\mbox{Re}W_A(T, S) > \tau$.
Let
\begin{align*}
\mathcal{H}_\tau = \Big\{x \in \mathcal{H}; \,{\|x\|}_A = 1, \,\mbox{and} \,\,\mbox{Re}W_A(T, S) \leq\frac{\tau}{2}\Big\}
\end{align*}
and
\begin{align*}
\delta = \sup\Big\{{\|Tx\|}_A; \, x\in \mathcal{H}_\tau\Big\}.
\end{align*}
We first claim that $\delta < {\|T\|}_A$.
Suppose $\delta \geq {\|T\|}_A$. Hence $\delta = {\|T\|}_A$. Thus there exists
a sequence of vectors $\{x_n\}$ in $\mathcal{H}_\tau$ such that
$\displaystyle{\lim_{n\rightarrow +\infty}}{\|Tx_n\|}_A = {\|T\|}_A$.
As $x_n \in \mathcal{H}_\tau$ so ${\|x_n\|}_A = 1$ and $\mbox{Re}W_A(T, S) \leq\frac{\tau}{2}$.
Now the sequence $\{{\langle Tx_n, Sx_n\rangle}_A\}$ is bounded, and hence it has a convergent subsequence, without
loss of generality, we can assume that $\{{\langle Tx_n, Sx_n\rangle}_A\}$
is convergent. If we set $\xi = \displaystyle{\lim_{n\rightarrow +\infty}}{\langle Tx_n, Sx_n\rangle}_A$,
then $\mbox{Re}(\xi) \leq \frac{\tau}{2}$ and this
contradicts the fact that $\mbox{Re}W_A(T, S) > \frac{\tau}{2}$. Thus $\delta < {\|T\|}_A$.
Let $\gamma_0 = \max\{\frac{-\tau}{2{\|S\|}^2_A}, \frac{\delta - {\|T\|}_A}{2{\|S\|}_A}\}$. Then $\gamma_0 <0$.
We claim that ${\|T + \gamma_0 S\|}_A < {\|T\|}_A$. Let $x$ be an $A$-unit vector in $\mathcal{H}$.
If $x \in \mathcal{H}_\tau$, then
\begin{align*}
{\|(T + \gamma_0 S)x\|}_A &\leq {\|Tx\|}_A + |\gamma_0|{\|Sx\|}_A \leq \delta - \gamma_0{\|S\|}_A
\\&\leq \delta + \frac{{\|T\|}_A - \delta}{2{\|S\|}_A}\,{\|S\|}_A = \frac{\delta}{2} + \frac{{\|T\|}_A}{2}
\end{align*}
and so ${\|(T + \gamma_0 S)x\|}_A \leq \frac{\delta}{2} + \frac{{\|T\|}_A}{2}$.

If $x\notin \mathcal{H}_\tau$, then we can write $Tx = (r + it)Sx + y$ with $r, t\in\mathbb{R}$ and
$Sx \perp_A y$. Thus
\begin{align*}
2r{\|S\|}^2_A \geq  2r{\|Sx\|}^2_A = 2\mbox{Re}{\langle Tx, Sx\rangle}_A > \frac{\tau}{2} \geq - \gamma_0 {\|S\|}^2_A,
\end{align*}
and hence $2r + \gamma_0 > 0$.
Now, let us put
\begin{align*}
\theta : = \inf\big\{{\|Sx\|}^2_A; \, x\notin \mathcal{H}_\tau, {\|x\|}_A = 1\big\}.
\end{align*}
Since $\gamma_0^2 + 2r\gamma_0 < 0$, we obtain
\begin{align*}
{\|(T + \gamma_0 S)x\|}^2_A &= {\Big\langle \big((r + \gamma_0) + it\big)Sx + y, \big((r + \gamma_0) + it\big)Sx + y\Big\rangle}_A
\\&= \big((r + \gamma_0)^2 + t^2\big){\|Sx\|}^2_A + {\|y\|}^2_A
\\&= {\|Tx\|}^2_A + (\gamma_0^2 + 2r\gamma_0){\|Sx\|}^2_A
\\&\leq {\|Tx\|}^2_A + (\gamma_0^2 + 2r\gamma_0)\inf\big\{{\|Sx\|}^2_A; \, x\notin \mathcal{H}_\tau, {\|x\|}_A = 1\big\}
\\&\leq {\|T\|}^2_A + (\gamma_0^2 + 2r\gamma_0)\theta.
\end{align*}
Hence ${\|(T + \gamma_0 S)x\|}^2_A\leq {\|T\|}^2_A + (\gamma_0^2 + 2r\gamma_0)\theta$.
Thus in all cases
\begin{align*}
{\|(T + \gamma_0 S)x\|}^2_A \leq
\max\Big\{\big(\frac{\delta}{2} + \frac{{\|T\|}_A}{2}\big)^2,
{\|T\|}^2_A + (\gamma_0^2 + 2r\gamma_0)\theta\Big\},
\end{align*}
whence
\begin{align*}
{\|T + \gamma_0 S\|}^2_A \leq \max\Big\{\big(\frac{\delta}{2} + \frac{{\|T\|}_A}{2}\big)^2,
{\|T\|}^2_A + (\gamma_0^2 + 2r\gamma_0)\theta\Big\}.
\end{align*}
Since $\max\Big\{\big(\frac{\delta}{2} + \frac{{\|T\|}_A}{2}\big)^2,
{\|T\|}^2_A + (\gamma_0^2 + 2r\gamma_0)\theta\Big\} < {\|T\|}^2_A$, we obtain
${\|T + \gamma_0 S\|}_A < {\|T\|}_A$.
Therefore we deduce that $T\not\perp^B_AS$ which
contradicts our hypothesis and
the proof is completed.
\end{proof}
The following corollary gives a direct application of Theorem \ref{T.2.2} for the case $A = I$.
\begin{corollary}$($see \cite[Remark 3.1]{B.S} and \cite[Lemma 2]{Pa}$)$
Let $\mathcal{H}$ be a complex Hilbert space and let $T, S\in\mathbb{B}(\mathcal{H})$.
Then the following statements are equivalent:
\begin{itemize}
\item[(i)] $T\perp^B S$.
\item[(ii)] There exists a sequence of unit vectors $\{x_n\}$ in $\mathcal{H}$ such that
\begin{align*}
\displaystyle{\lim_{n\rightarrow +\infty}}\|Tx_n\| = \|T\|
\quad \mbox{and} \quad \displaystyle{\lim_{n\rightarrow +\infty}}\langle Tx_n, Sx_n\rangle = 0.
\end{align*}
\end{itemize}
\end{corollary}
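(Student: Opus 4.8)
The plan is to obtain this corollary simply as the special case $A = I$ of Theorem \ref{T.2.2}. First I would record what the various semi-Hilbertian objects become when the positive operator is taken to be the identity $I$: the sesquilinear form ${\langle \cdot,\cdot\rangle}_I$ is precisely the given inner product $\langle \cdot,\cdot\rangle$, the seminorm ${\|\cdot\|}_I$ is the norm $\|\cdot\|$, the closed subspace $\overline{\mathcal{R}(I)}$ is all of $\mathcal{H}$, and the projection $P$ is the identity. Since for every $T\in\mathbb{B}(\mathcal{H})$ one has $\|Tx\|\leq\|T\|\,\|x\|$ for all $x\in\mathcal{H}$, it follows that $\mathbb{B}_{I^{1/2}}(\mathcal{H}) = \mathbb{B}(\mathcal{H})$ and that ${\|T\|}_I = \|T\|$. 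Likewise an $I$-unit vector is exactly a unit vector, and by Definition \ref{de.31} the relation $\perp^B_I$ coincides with the usual Birkhoff--James orthogonality $\perp^B$.

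Having made these identifications, I would invoke the equivalence (i)$\,\Leftrightarrow\,$(iii) of Theorem \ref{T.2.2} with $A = I$. Condition (iii) there reads $T\perp^B_I S$, which is exactly statement (i) of the corollary, namely $T\perp^B S$; condition (i) there asserts the existence of a sequence of $I$-unit vectors $\{x_n\}$ with $\displaystyle{\lim_{n\rightarrow+\infty}}\|Tx_n\| = \|T\|$ and $\displaystyle{\lim_{n\rightarrow+\infty}}\langle Tx_n, Sx_n\rangle = 0$, which is exactly statement (ii) of the corollary. Hence (i) and (ii) of the corollary are equivalent, and the proof is complete.

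There is essentially no obstacle here: the whole content of the corollary is already contained in Theorem \ref{T.2.2}, and the only thing that must be checked is the routine observation that all of the $A$-dependent notions (the seminorm ${\|\cdot\|}_A$, the algebra $\mathbb{B}_{A^{1/2}}(\mathcal{H})$, the notion of $A$-unit vector, and the relation $\perp^B_A$) reduce to their classical counterparts when $A = I$. At the end I would note that this recovers the original result of Bhatia--\v{S}emrl \cite[Remark 3.1]{B.S} and of Paul \cite[Lemma 2]{Pa}, so that Theorem \ref{T.2.2} genuinely extends it to the semi-Hilbertian setting.
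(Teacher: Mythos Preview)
Your proposal is correct and matches the paper's own approach exactly: the paper presents this corollary without proof, describing it simply as ``a direct application of Theorem \ref{T.2.2} for the case $A = I$.'' Your routine verification that the $A$-dependent notions specialize to the classical ones when $A=I$ is precisely what is implicitly being invoked.
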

In what follows, for $T\in\mathbb{B}_{A^{1/2}}(\mathcal{H})$, we denote $\mathbb{M}^T_A$
the set of all $A$-unit vectors at which $T$ attains the seminorm ${\|\cdot\|}_A$, that is,
\begin{align*}
\mathbb{M}^T_A = \Big\{x\in \mathcal{H}: {\|x\|}_A = 1,\,{\|Tx\|}_A = {\|T\|}_A\Big\}.
\end{align*}
For more information on norm-attaining sets, see \cite{F.G.M.R}.
In the next theorem, we consider a  finite-dimensional Hilbert space and characterize the $A$-Birkhoff--James orthogonality
of operators in semi-Hilbertian spaces.
\begin{theorem}\label{T.2.3}
Let $\mathcal{H}$ be a finite-dimensional Hilbert space and let $T, S\in\mathbb{B}_{A^{1/2}}(\mathcal{H})$.
Then the following conditions are equivalent:
\begin{itemize}
\item[(i)] There exists $x\in \mathbb{M}^T_A$ such that $Tx\perp_A Sx$.
\item[(ii)] $T\perp^B_A S$.
\end{itemize}
\end{theorem}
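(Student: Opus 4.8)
The plan is to reduce the finite-dimensional statement to the sequential characterization already established in Theorem~\ref{T.2.2}. The implication (i)$\Rightarrow$(ii) is immediate: if there is an $A$-unit vector $x$ with ${\|Tx\|}_A = {\|T\|}_A$ and ${\langle Tx, Sx\rangle}_A = 0$, then taking the constant sequence $x_n = x$ verifies condition (i) of Theorem~\ref{T.2.2}, whence $T\perp^B_A S$. So the real content is (ii)$\Rightarrow$(i), and the point is to upgrade the limiting sequence from Theorem~\ref{T.2.2} to an actual attaining vector using compactness coming from finite-dimensionality.

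For (ii)$\Rightarrow$(i), I would first invoke Theorem~\ref{T.2.2} to obtain a sequence of $A$-unit vectors $\{x_n\}$ with ${\|Tx_n\|}_A \to {\|T\|}_A$ and ${\langle Tx_n, Sx_n\rangle}_A \to 0$. As in the proof of Lemma~\ref{L.2.1}, I may assume $x_n \in \overline{\mathcal{R}(A)}$, because replacing $x_n$ by its component in $\overline{\mathcal{R}(A)}$ under the decomposition $\mathcal{H} = \mathcal{N}(A)\oplus \overline{\mathcal{R}(A)}$ changes neither ${\|x_n\|}_A$, nor ${\|Tx_n\|}_A$, nor ${\langle Tx_n, Sx_n\rangle}_A$ (using $T(\mathcal{N}(A)), S(\mathcal{N}(A)) \subset \mathcal{N}(A)$). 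Since $\mathcal{H}$ is finite-dimensional, the unit sphere of $(\overline{\mathcal{R}(A)}, {\|\cdot\|}_A)$ — note ${\|\cdot\|}_A$ is a genuine norm on $\overline{\mathcal{R}(A)}$ because $A$ restricted there is injective — is compact, so after passing to a subsequence we may assume $x_n \to x$ for some $x \in \overline{\mathcal{R}(A)}$ with ${\|x\|}_A = 1$. By continuity of the maps $y \mapsto {\|Ty\|}_A$ and $y \mapsto {\langle Ty, Sy\rangle}_A$ (both $A$-bounded, hence continuous on this finite-dimensional space), we get ${\|Tx\|}_A = \lim_n {\|Tx_n\|}_A = {\|T\|}_A$ and ${\langle Tx, Sx\rangle}_A = \lim_n {\langle Tx_n, Sx_n\rangle}_A = 0$. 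Thus $x \in \mathbb{M}^T_A$ and $Tx \perp_A Sx$, which is exactly~(i).

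The only subtlety to be careful about is the passage to a convergent subsequence: although $\mathcal{H}$ itself is finite-dimensional and $\{x_n\}$ is bounded in the original Hilbert norm, one needs the $A$-norm convergence of the limit to conclude ${\|x\|}_A = 1$ rather than merely ${\|x\|}_A \le 1$; this is why I restrict to $\overline{\mathcal{R}(A)}$ where ${\|\cdot\|}_A$ is an honest norm equivalent (by finite-dimensionality) to the ambient norm, so that $x_n \to x$ in $\|\cdot\|$ forces $x_n \to x$ in ${\|\cdot\|}_A$ and hence ${\|x\|}_A = 1$. I expect this normalization bookkeeping to be the main (minor) obstacle; everything else is continuity plus Theorem~\ref{T.2.2}.
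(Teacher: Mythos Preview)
Your proof is correct and follows essentially the same route as the paper's own argument: both directions rely on Theorem~\ref{T.2.2}, the forward one via the constant sequence, and the backward one by projecting the sequence onto $\overline{\mathcal{R}(A)}$, using that ${\|\cdot\|}_A$ is a genuine norm there equivalent (by finite-dimensionality) to $\|\cdot\|$, and passing to a convergent subsequence. The only cosmetic difference is that the paper introduces the restricted operators $\widetilde{T} = PT\mid_{\overline{\mathcal{R}(A)}}$, $\widetilde{S} = PS\mid_{\overline{\mathcal{R}(A)}}$ and $A_0 = A\mid_{\overline{\mathcal{R}(A)}}$ explicitly, whereas you carry out the same reduction by replacing each $x_n$ with its $\overline{\mathcal{R}(A)}$-component; your justification that this replacement leaves ${\|x_n\|}_A$, ${\|Tx_n\|}_A$ and ${\langle Tx_n, Sx_n\rangle}_A$ unchanged is exactly the content of that reduction.
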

\begin{proof}
(i)$\Rightarrow$(ii)
Suppose that (i) holds. Then there exists an $A$-unit vectors $x\in \mathcal{H}$ such that
${\|Tx\|}_A = {\|T\|}_A$ and $Tx\perp_A Sx$. Put $x_n = x$ for all $n \in \mathbb{N}$.
So, by the equivalence (i)$\Leftrightarrow$(iii) in Theorem \ref{T.2.2}, we deduce that $T\perp^B_A S$.

(ii)$\Rightarrow$(i)
First note that, by using the decomposition $\mathcal{H} = \mathcal{N}(A)\oplus\overline{\mathcal{R}(A)}$
and letting $A_0 = A\mid_{\overline{\mathcal{R}(A)}}$, it can be seen that
the set $\{x\in \overline{\mathcal{R}(A)}; \, {\|x\|}_{A_0} = 1\}$ is
homeomorphic to the set $\{x\in \overline{\mathcal{R}(A)}; \, \|x\| = 1\}$,
which is compact since $\overline{\mathcal{R}(A)}$ is finite-dimensional.
Thus we get the set $\{x\in \overline{\mathcal{R}(A)}; \, {\|x\|}_{A_0} = 1\}$ is compact.

Now, suppose that (ii) holds. Put
$\widetilde{T} = PT\mid_{\overline{\mathcal{R}(A)}}$ and $\widetilde{S} = PS\mid_{\overline{\mathcal{R}(A)}}$.
Therefore, by the equivalence (i)$\Leftrightarrow$(iii) in Theorem \ref{T.2.2},
there exists a sequence of $A_0$-unit vectors $\{x_n\}$ in $\overline{\mathcal{R}(A)}$ such that
\begin{align*}
\displaystyle{\lim_{n\rightarrow +\infty}}{\|\widetilde{T}x_n\|}_{A_0} = {\|\widetilde{T}\|}_{A_0}
\quad \mbox{and} \quad \displaystyle{\lim_{n\rightarrow +\infty}}{\langle \widetilde{T}x_n, \widetilde{S}x_n\rangle}_{A_0} = 0.
\end{align*}
Since the set $\{x\in \overline{\mathcal{R}(A)}; \, {\|x\|}_{A_0} = 1\}$ is compact, hence
$\{x_n\}$ has a subsequence $\{x_{n_k}\}$ that converges to some $x\in \overline{\mathcal{R}(A)}$ with
${\|x\|}_{A_0} = 1$.
This yields
${\|\widetilde{T}x\|}_{A_0} = \displaystyle{\lim_{k\rightarrow +\infty}}{\|\widetilde{T}x_{n_k}\|}_{A_0}
= {\|\widetilde{T}\|}_{A_0}$
and ${\langle \widetilde{T}x, \widetilde{S}x\rangle}_{A_0}
= \displaystyle{\lim_{k\rightarrow +\infty}}{\langle \widetilde{T}x_{n_k}, \widetilde{S}x_{n_k}\rangle}_{A_0} = 0$.
From this it follows that $x\in \mathbb{M}^T_A$ and $Tx\perp_A Sx$.
\end{proof}
As an immediate consequence of Theorem \ref{T.2.3}, we have the following result.
\begin{corollary}\label{C.2.4}
Let $\mathcal{H}$ be finite dimensional and let $T, S\in\mathbb{B}_{A^{1/2}}(\mathcal{H})$.
Then the following statements are equivalent:
\begin{itemize}
\item[(i)] $T\perp^B_A S$.
\item[(ii)] There exists $x\in \mathbb{M}^T_A$
such that for every $\gamma \in \mathbb{C}$
\begin{align*}
{\|Tx + \gamma Sx\|}^2_A = {\|Tx\|}^2_A + |\gamma|^2{\|Sx\|}^2_A.
\end{align*}
\end{itemize}
\end{corollary}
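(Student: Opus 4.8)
The plan is to derive the corollary directly from Theorem~\ref{T.2.3}, using the elementary observation that, for a fixed $A$-unit vector $x$, the quadratic identity in (ii) is nothing more than a restatement of the relation $Tx \perp_A Sx$. The whole argument rests on the sesquilinear expansion
\begin{align*}
{\|Tx + \gamma Sx\|}^2_A = {\|Tx\|}^2_A + 2\mbox{Re}\big(\overline{\gamma}{\langle Tx, Sx\rangle}_A\big) + |\gamma|^2{\|Sx\|}^2_A, \qquad \gamma \in \mathbb{C},
\end{align*}
which follows immediately from the properties of ${\langle \cdot, \cdot\rangle}_A$.

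For the implication (i)$\Rightarrow$(ii) I would invoke Theorem~\ref{T.2.3} to produce an $x \in \mathbb{M}^T_A$ with $Tx \perp_A Sx$, i.e. ${\langle Tx, Sx\rangle}_A = 0$; substituting this into the displayed expansion kills the middle term for every $\gamma$ and yields exactly the identity in (ii). For the converse, starting from an $x \in \mathbb{M}^T_A$ for which the identity holds for all $\gamma \in \mathbb{C}$, the same expansion forces $\mbox{Re}\big(\overline{\gamma}{\langle Tx, Sx\rangle}_A\big) = 0$ for every $\gamma$; choosing $\gamma = {\langle Tx, Sx\rangle}_A$ makes this read $|{\langle Tx, Sx\rangle}_A|^2 = 0$, so ${\langle Tx, Sx\rangle}_A = 0$, that is, $Tx \perp_A Sx$. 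Since this $x$ lies in $\mathbb{M}^T_A$, the implication (i)$\Rightarrow$(ii) of Theorem~\ref{T.2.3} then delivers $T \perp^B_A S$.

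There is no real obstacle here; the single point worth flagging is that the identity in (ii) must be imposed for all complex $\gamma$, not merely for real ones, since it is precisely the freedom to rotate $\gamma$ in $\mathbb{C}$ that upgrades the vanishing of the real part to the vanishing of ${\langle Tx, Sx\rangle}_A$ itself. Everything else is just the sesquilinear computation above combined with a direct appeal to Theorem~\ref{T.2.3}.
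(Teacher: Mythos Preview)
Your argument is correct and is exactly what the paper has in mind: the corollary is stated there as an immediate consequence of Theorem~\ref{T.2.3}, with no separate proof given, and your sesquilinear expansion is precisely the elementary observation that makes the equivalence of ``$Tx\perp_A Sx$'' and the quadratic identity in (ii) transparent. The only cosmetic point is that in your final sentence you invoke ``(i)$\Rightarrow$(ii) of Theorem~\ref{T.2.3}'' to conclude $T\perp^B_A S$; that is indeed the correct direction in the labeling of Theorem~\ref{T.2.3}, so the reference is right.
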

\section{Some $A$-distance formulas}
In this section we give some formulas for the $A$-distance of an operator to
the class of multiple scalars of another one in semi-Hilbertian spaces.
For $T, S\in\mathbb{B}_{A^{1/2}}(\mathcal{H})$ we have, by definition,
$d_A(T, \mathbb{C}S) := \displaystyle{\inf_{\gamma \in \mathbb{C}}}{\|T + \gamma S\|}_{A}$.
The following auxiliary lemma is needed for next results.
\begin{lemma}\label{L.3.1}
Let $T, S\in\mathbb{B}_{A^{1/2}}(\mathcal{H})$. Then there exists
$\zeta_0 \in \mathbb{C}$ such that
\begin{align*}
d_A(T, \mathbb{C}S) = {\|T + \zeta_0 S\|}_{A}.
\end{align*}
\end{lemma}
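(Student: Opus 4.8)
The plan is to show that the function $\gamma \mapsto {\|T + \gamma S\|}_A$ attains its infimum over $\mathbb{C}$. The first step is to dispose of the degenerate case ${\|S\|}_A = 0$: then ${\|T + \gamma S\|}_A = {\|T\|}_A$ for every $\gamma$ (since $|{\|T+\gamma S\|}_A - {\|T\|}_A| \le |\gamma|{\|S\|}_A = 0$), so any $\zeta_0$, say $\zeta_0 = 0$, works. Henceforth assume ${\|S\|}_A > 0$.

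The next step is to reduce the search for the infimum to a bounded region. Set $d := d_A(T,\mathbb{C}S) = \inf_{\gamma\in\mathbb{C}}{\|T+\gamma S\|}_A$; taking $\gamma = 0$ gives $d \le {\|T\|}_A$. For $|\gamma|$ large we have, by the reverse triangle inequality for the seminorm ${\|\cdot\|}_A$ on $\mathbb{B}_{A^{1/2}}(\mathcal{H})$,
\begin{align*}
{\|T + \gamma S\|}_A \ge |\gamma|{\|S\|}_A - {\|T\|}_A,
\end{align*}
which exceeds ${\|T\|}_A \ge d$ as soon as $|\gamma| > 2{\|T\|}_A/{\|S\|}_A =: R$. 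Hence the infimum over $\mathbb{C}$ coincides with the infimum over the closed disc $\overline{D} = \{\gamma\in\mathbb{C} : |\gamma|\le R\}$, a compact subset of $\mathbb{C}$.

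The final step is a continuity/compactness argument. The map $\gamma \mapsto {\|T+\gamma S\|}_A$ is continuous on $\mathbb{C}$: indeed, for $\gamma,\gamma'\in\mathbb{C}$,
\begin{align*}
\big|{\|T+\gamma S\|}_A - {\|T+\gamma' S\|}_A\big| \le {\|(\gamma-\gamma')S\|}_A = |\gamma - \gamma'|\,{\|S\|}_A,
\end{align*}
so it is even Lipschitz. A continuous real-valued function on the compact set $\overline{D}$ attains its minimum, say at $\zeta_0 \in \overline{D}$; by the previous paragraph this is also the minimum over all of $\mathbb{C}$, so $d_A(T,\mathbb{C}S) = {\|T+\zeta_0 S\|}_A$.

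There is essentially no obstacle here — the only point requiring a moment's care is checking that ${\|\cdot\|}_A$ on $\mathbb{B}_{A^{1/2}}(\mathcal{H})$ genuinely satisfies the triangle inequality (and hence its reverse form) and is homogeneous, so that the coercivity estimate ${\|T+\gamma S\|}_A \ge |\gamma|{\|S\|}_A - {\|T\|}_A$ and the Lipschitz estimate are legitimate; both follow directly from the fact that ${\|\cdot\|}_A$ is a genuine seminorm on $\mathbb{B}_{A^{1/2}}(\mathcal{H})$, as recorded in the preliminaries. The argument is the standard "a coercive continuous function attains its infimum" pattern, transported verbatim to the semi-Hilbertian setting.
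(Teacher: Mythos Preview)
Your proof is correct and follows essentially the same approach as the paper's: split off the degenerate case ${\|S\|}_A=0$, then use the reverse triangle inequality to localize the infimum to the closed disc of radius $2{\|T\|}_A/{\|S\|}_A$ and invoke continuity plus compactness. The only cosmetic difference is that you supply the Lipschitz estimate for continuity explicitly, whereas the paper simply asserts continuity.
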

\begin{proof}
If ${\|S\|}_{A} = 0$, then
\begin{align*}
{\|T + \gamma S\|}_{A} \geq {\|T\|}_{A} - |\gamma|{\|S\|}_{A} = {\|T\|}_{A},
\end{align*}
for all $\gamma \in \mathbb{C}$. It is therefore enough to put $\zeta_0 = 0$.
If ${\|S\|}_{A} \neq 0$, then put
$\mathbb{D} := \left\{\gamma \in \mathbb{C}; \, |\gamma| \leq \frac{2{\|T\|}_{A}}{{\|S\|}_{A}}\right\}$
and define $f:\, \mathbb{D}\rightarrow  \mathbb{R}$
by the formula $f(\gamma) = {\|T + \gamma S\|}_{A}$.
Clearly, $f$ is continuous and attains its minimum at, say, $\zeta_0 \in \mathbb{D}$ (of course, there may be
many such points). Then ${\|T + \gamma S\|}_{A} \geq {\|T + \zeta_0 S\|}_{A}$
for all $\gamma \in \mathbb{D}$. If $\gamma \notin \mathbb{D}$,
then $|\gamma| > \frac{2{\|T\|}_{A}}{{\|S\|}_{A}}$. Since $0 \in \mathbb{D}$, we obtain
\begin{align*}
{\|T + \gamma S\|}_{A} \geq |\gamma|{\|S\|}_{A} - {\|T\|}_{A}
> 2{\|T\|}_{A} - {\|T\|}_{A} = {\|T\|}_{A} \geq {\|T + \zeta_0 S\|}_{A}.
\end{align*}
Thus ${\|T + \gamma S\|}_{A} \geq {\|T + \zeta_0 S\|}_{A}$
for all $\gamma \notin \mathbb{D}$. Therefore, ${\|T + \gamma S\|}_{A} \geq {\|T + \zeta_0 S\|}_{A}$
for all $\gamma \in \mathbb{C}$. So, we conclude that
$\displaystyle{\inf_{\gamma \in \mathbb{C}}}{\|T + \gamma S\|}_{A} = {\|T + \zeta_0 S\|}_{A}$ and
hence $d_A(T, \mathbb{C}S) = {\|T + \zeta_0 S\|}_{A}$.
\end{proof}
The following result is a kind of the Pythagorean relation for bounded operators in semi-Hilbertian spaces.
\begin{theorem}\label{T.3.2}
Let $T, S\in\mathbb{B}_{A^{1/2}}(\mathcal{H})$ with $m_A(S) > 0$.
Then there exists a unique $\zeta_0\in \mathbb{C}$, such that
\begin{align*}
{\big\|(T + \zeta_0 S) + \gamma S \big\|}^2_A \geq {\|T + \zeta_0 S\|}^2_A + |\gamma|^2\,m^2_A(S)\end{align*}
for every $\gamma \in \mathbb{C}$.
\end{theorem}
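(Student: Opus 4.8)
The plan is to combine Lemma~\ref{L.3.1} with the equivalence (ii)$\Leftrightarrow$(iii) of Theorem~\ref{T.2.2}, reserving the hypothesis $m_A(S)>0$ for the uniqueness part only.

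For existence, I would first invoke Lemma~\ref{L.3.1} to produce $\zeta_0\in\mathbb{C}$ with ${\|T+\zeta_0 S\|}_A = d_A(T,\mathbb{C}S) = \inf_{\gamma\in\mathbb{C}}{\|T+\gamma S\|}_A$. Then, for every $\gamma\in\mathbb{C}$, a relabelling of the scalar gives ${\|(T+\zeta_0 S)+\gamma S\|}_A = {\|T+(\zeta_0+\gamma)S\|}_A \geq d_A(T,\mathbb{C}S) = {\|T+\zeta_0 S\|}_A$, which is precisely the assertion $(T+\zeta_0 S)\perp^B_A S$. Since $\mathbb{B}_{A^{1/2}}(\mathcal{H})$ is a subalgebra of $\mathbb{B}(\mathcal{H})$, we have $T+\zeta_0 S\in\mathbb{B}_{A^{1/2}}(\mathcal{H})$, so applying the implication (iii)$\Rightarrow$(ii) of Theorem~\ref{T.2.2} to the pair $(T+\zeta_0 S,\,S)$ yields exactly ${\|(T+\zeta_0 S)+\gamma S\|}^2_A \geq {\|T+\zeta_0 S\|}^2_A + |\gamma|^2 m^2_A(S)$ for all $\gamma\in\mathbb{C}$.

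For uniqueness, I would suppose that $\zeta_0$ and $\zeta_1$ both satisfy the displayed inequality. Taking $\gamma=\zeta_1-\zeta_0$ in the inequality for $\zeta_0$ gives ${\|T+\zeta_1 S\|}^2_A \geq {\|T+\zeta_0 S\|}^2_A + |\zeta_1-\zeta_0|^2 m^2_A(S)$, and symmetrically ${\|T+\zeta_0 S\|}^2_A \geq {\|T+\zeta_1 S\|}^2_A + |\zeta_0-\zeta_1|^2 m^2_A(S)$. Adding these two inequalities forces $2|\zeta_0-\zeta_1|^2 m^2_A(S)\leq 0$, and since $m_A(S)>0$ this gives $\zeta_0=\zeta_1$. (Equivalently, the displayed inequality exhibits $\zeta_0$ as the \emph{strict} minimizer of $\gamma\mapsto{\|T+\gamma S\|}_A$, hence the unique one.)

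No genuine obstacle is anticipated, since the argument is essentially a repackaging of Lemma~\ref{L.3.1} and Theorem~\ref{T.2.2}. The two points that need care are that Theorem~\ref{T.2.2} must be applied to the shifted operator $T+\zeta_0 S$ rather than to $T$ itself, and that the hypothesis $m_A(S)>0$ enters only in the uniqueness step; existence remains valid when $m_A(S)=0$, in which case the conclusion degenerates to $(T+\zeta_0 S)\perp^B_A S$.
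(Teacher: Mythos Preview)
Your proposal is correct and follows essentially the same route as the paper: existence via Lemma~\ref{L.3.1} and the implication (iii)$\Rightarrow$(ii) of Theorem~\ref{T.2.2} applied to $T+\zeta_0 S$, and uniqueness by substituting $\gamma=\zeta_0-\zeta_1$ into the displayed inequality. The only cosmetic difference is in the uniqueness step: the paper uses just the inequality for $\zeta_1$ together with the minimizer property $\|T+\zeta_1 S\|_A\geq\|T+\zeta_0 S\|_A$, whereas you add the two symmetric inequalities; both arrive at $|\zeta_0-\zeta_1|^2 m_A^2(S)\leq 0$.
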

\begin{proof}
By Lemma \ref{L.3.1}, there exists
$\zeta_0 \in \mathbb{C}$ such that
\begin{align*}
\displaystyle{\inf_{\gamma \in \mathbb{C}}}{\|T + \gamma S\|}_{A} = {\|T + \zeta_0 S\|}_{A},
\end{align*}
or equivalently,
\begin{align*}
\displaystyle{\inf_{\xi \in \mathbb{C}}}{\|(T + \zeta_0 S) + \xi S\|}_{A} = {\|T + \zeta_0 S\|}_{A}.
\end{align*}
Thus $(T + \zeta_0 S)\perp^B_A S$. So, by the equivalence (i)$\Leftrightarrow$(ii) in Theorem \ref{T.2.2},
for every $\gamma \in \mathbb{C}$, we have
\begin{align*}
{\big\|(T + \zeta_0 S) + \gamma S \big\|}^2_A \geq {\|T + \zeta_0 S\|}^2_A + |\gamma|^2\,m^2_A(S).
\end{align*}
Now, suppose that $\zeta_1$ is another point satisfying the inequality
\begin{align*}
{\big\|(T + \zeta_1 S) + \gamma S \big\|}^2_A \geq {\|T + \zeta_1 S\|}^2_A + |\gamma|^2\,m^2_A(S)
\qquad (\gamma \in \mathbb{C}).
\end{align*}
Choose $\gamma = \zeta_0 - \zeta_1$ to get
\begin{align*}
{\|T + \zeta_0 S\|}^2_A &= {\big\|(T + \zeta_1 S) + (\zeta_0 - \zeta_1) S \big\|}^2_A
\\& \geq {\|T + \zeta_1 S\|}^2_A + |\zeta_0 - \zeta_1|^2\,m^2_A(S)
\\& \geq {\|T + \zeta_0 S\|}^2_A + |\zeta_0 - \zeta_1|^2\,m^2_A(S).
\end{align*}
Hence $0 \geq |\zeta_0 - \zeta_1|^2\,m^2_A(S)$. Since $m^2_A(S)> 0$,
we get $|\zeta_0 - \zeta_1|^2 = 0$, or equivalently, $\zeta_0 = \zeta_1$.
This shows that $\zeta_0$ is unique.
\end{proof}
Here, we establish one of our main results. In
fact, in what follows, we provide a version of the Bhatia--\v{S}emrl theorem (see \cite[p. 84]{B.S})
in the setting of operators in semi-Hilbertian spaces.
\begin{theorem}\label{T.3.3}
Let $T, S\in\mathbb{B}_{A^{1/2}}(\mathcal{H})$. Then
\begin{align*}
d_A(T, \mathbb{C}S) =
\sup\Big\{|{\langle Tx, y\rangle}_A|; \,  {\|x\|}_{A} = {\|y\|}_{A} = 1, \, Sx \perp_A y\Big\}.
\end{align*}
\end{theorem}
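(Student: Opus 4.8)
The plan is to prove two inequalities. Write $d := d_A(T,\mathbb{C}S)$ and $M := \sup\{|{\langle Tx,y\rangle}_A| : {\|x\|}_A = {\|y\|}_A = 1,\ Sx \perp_A y\}$.

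\emph{The inequality $M \le d$.} Fix any $\gamma \in \mathbb{C}$ and any $A$-unit vectors $x,y$ with $Sx \perp_A y$. Then ${\langle (T+\gamma S)x, y\rangle}_A = {\langle Tx,y\rangle}_A + \gamma{\langle Sx,y\rangle}_A = {\langle Tx,y\rangle}_A$, and by the Cauchy--Schwarz inequality for ${\langle \cdot,\cdot\rangle}_A$ together with the formula ${\|R\|}_A = \sup\{|{\langle Rx,y\rangle}_A| : {\|x\|}_A = {\|y\|}_A = 1\}$ recalled in the introduction, we get $|{\langle Tx,y\rangle}_A| = |{\langle (T+\gamma S)x,y\rangle}_A| \le {\|T+\gamma S\|}_A$. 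Taking the supremum over such $x,y$ gives $M \le {\|T+\gamma S\|}_A$, and then the infimum over $\gamma$ gives $M \le d$.

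\emph{The inequality $d \le M$.} This is the substantive direction. By Lemma \ref{L.3.1} there is $\zeta_0 \in \mathbb{C}$ with $d = {\|T+\zeta_0 S\|}_A$, and, as in the proof of Theorem \ref{T.3.2}, the operator $T_0 := T+\zeta_0 S$ satisfies $T_0 \perp^B_A S$. By the equivalence (iii)$\Rightarrow$(i) in Theorem \ref{T.2.2}, there is a sequence of $A$-unit vectors $\{x_n\}$ with ${\|T_0 x_n\|}_A \to {\|T_0\|}_A = d$ and ${\langle T_0 x_n, S x_n\rangle}_A \to 0$. The idea is now to manufacture, for each $n$, an $A$-unit vector $y_n$ that is $A$-orthogonal to $Sx_n$ and for which $|{\langle T x_n, y_n\rangle}_A|$ is close to ${\|T_0 x_n\|}_A$. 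The natural candidate is to take $y_n$ to be the normalized $A$-orthogonal projection of $T_0 x_n$ onto the $A$-orthogonal complement of $Sx_n$ (inside $\overline{\mathcal{R}(A)}$, so that ${\|\cdot\|}_A$ behaves like a genuine inner-product norm): writing $T_0 x_n = c_n S x_n + w_n$ with $Sx_n \perp_A w_n$, one has $|c_n|^2 {\|Sx_n\|}_A^2 = |{\langle T_0 x_n, Sx_n\rangle}_A|^2 \to 0$ while ${\|Sx_n\|}_A \ge m_A(S)$ is bounded, so $c_n \to 0$ (in the case $m_A(S) > 0$) and hence ${\|w_n\|}_A = \sqrt{{\|T_0 x_n\|}_A^2 - |c_n|^2{\|Sx_n\|}_A^2} \to d$; then set $y_n := w_n/{\|w_n\|}_A$ whenever ${\|w_n\|}_A \ne 0$. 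Since ${\langle Tx_n, y_n\rangle}_A = {\langle T_0 x_n, y_n\rangle}_A - \zeta_0 {\langle Sx_n, y_n\rangle}_A = {\langle w_n, y_n\rangle}_A - 0 = {\|w_n\|}_A \to d$, and each pair $(x_n, y_n)$ is admissible in the supremum defining $M$ (after replacing $x_n$ by its component in $\overline{\mathcal{R}(A)}$, which does not change any $A$-seminorm or $A$-inner product, using that $T, S$ leave $\mathcal{N}(A)$ invariant as in Lemma \ref{L.2.1}), we conclude $M \ge d$.

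\emph{Anticipated obstacle.} The clean argument above assumed $m_A(S) > 0$, which is needed to force $c_n \to 0$. The general case requires handling $S$ with $m_A(S) = 0$ (including ${\|S\|}_A = 0$, where $d = {\|T\|}_A$ and any $A$-unit $y_n$ approximating the seminorm of $T$ works since $Sx_n \perp_A y_n$ automatically). When $0 < {\|S\|}_A$ but $m_A(S) = 0$ one cannot conclude $c_n \to 0$ directly; the fix is to argue instead with $\delta_n := \sup\{|{\langle T_0 x, y\rangle}_A| : {\|x\|}_A = {\|y\|}_A = 1,\ Sx \perp_A y, \ x \in \overline{\mathcal{R}(A)}\}$ and show $\delta_n$ cannot be bounded away from $d$ by exploiting that, along the sequence $\{x_n\}$ above, ${\langle T_0 x_n, S x_n\rangle}_A \to 0$ forces the $Sx_n$-component of $T_0 x_n$ to be $A$-negligible relative to ${\|T_0 x_n\|}_A \to d$ (if ${\|Sx_n\|}_A \not\to 0$ we are in the previous case on a subsequence; if ${\|Sx_n\|}_A \to 0$ along a subsequence then $T_0 x_n$ is itself nearly $A$-orthogonal to $Sx_n$ and we may project onto any complement). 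Keeping track of these subsequences and of the reduction to $\overline{\mathcal{R}(A)}$ — where ${\|\cdot\|}_A$ is an honest norm and the orthogonal decomposition $T_0 x = cSx + w$ is legitimate — is where the care is needed; the rest is Cauchy--Schwarz and normalization.
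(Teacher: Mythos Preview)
Your overall strategy coincides with the paper's: prove $M\le d$ by Cauchy--Schwarz, then invoke Lemma~\ref{L.3.1} and Theorem~\ref{T.2.2} to get a norming sequence $\{x_n\}$ for $T_0:=T+\zeta_0 S$ with ${\langle T_0x_n,Sx_n\rangle}_A\to 0$, decompose $T_0x_n=c_nSx_n+w_n$ orthogonally, and read off $|{\langle Tx_n,y_n\rangle}_A|={\|w_n\|}_A\to d$ with $y_n=w_n/{\|w_n\|}_A$. The minor algebra slip $|c_n|^2{\|Sx_n\|}_A^2=|{\langle T_0x_n,Sx_n\rangle}_A|^2$ (the left side should carry ${\|Sx_n\|}_A^4$) does not affect the case $m_A(S)>0$, which is handled correctly.

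The real gap is in your ``anticipated obstacle'' paragraph. When $\|Sx_n\|_A\to 0$ along the sequence furnished by Theorem~\ref{T.2.2}, the component of $T_0x_n$ along $Sx_n$ has $A$-norm $|{\langle T_0x_n,Sx_n\rangle}_A|/{\|Sx_n\|}_A$, and this need \emph{not} tend to $0$ even though the numerator does. Your suggestion ``$T_0x_n$ is itself nearly $A$-orthogonal to $Sx_n$ and we may project onto any complement'' does not repair this: for a fixed $x_n$, \emph{every} $A$-unit $y$ with $Sx_n\perp_A y$ satisfies $|{\langle T_0x_n,y\rangle}_A|\le {\|w_n\|}_A$, so if ${\|w_n\|}_A$ stays bounded away from $d$ no choice of $y$ helps. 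Concretely, take $A=I$, $T=I$, and $S$ diagonal with $Se_k=e_k/k$ on $\ell^2$. Then $d_A(T,\mathbb{C}S)={\|T\|}=1$, and $x_n=e_n$ is a perfectly valid sequence from Theorem~\ref{T.2.2} (${\|Tx_n\|}=1$, ${\langle Tx_n,Sx_n\rangle}=1/n\to 0$); yet $Tx_n=n\,Sx_n$, so $w_n=0$ and $|{\langle Tx_n,y\rangle}|=0$ for every unit $y\perp Sx_n$. The supremum $M$ is still $1$ here, but one must use \emph{different} $x$'s (mixtures of basis vectors) to witness it; the sequence handed to you by Theorem~\ref{T.2.2} can be useless.

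For what it is worth, the paper's own argument passes over exactly this point: it silently assumes $\overline{\alpha_n}{\langle Tx_n,Sx_n\rangle}_A\to 0$ when concluding $d^2=\lim|\beta_n|^2$, which fails in the example above. So you have correctly located the delicate spot; what is still missing is a mechanism that either \emph{selects} a better norming sequence or bypasses the decomposition altogether in the degenerate regime.
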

\begin{proof}
Let $x, y \in \mathcal{H}$, ${\|x\|}_{A} = {\|y\|}_{A} = 1$ and let $Sx \perp_A y$.
The Cauchy--Schwarz inequality implies
\begin{align*}
|{\langle Tx, y\rangle}_A| = |{\langle (T + \gamma S)x, y\rangle}_A|\leq
{\|(T + \gamma S)x\|}_{A}{\|y\|}_{A} \leq {\|T + \gamma S\|}_{A}
\end{align*}
for all $\gamma \in \mathbb{C}$. Thus
\begin{align*}
\sup\Big\{|{\langle Tx, y\rangle}_A|; \,  {\|x\|}_{A} = {\|y\|}_{A} = 1, \, Sx \perp_A y\Big\}
\leq {\|T + \gamma S\|}_{A}
\end{align*}
for all $\gamma \in \mathbb{C}$ and so
\begin{align*}
\sup\Big\{|{\langle Tx, y\rangle}_A|; \,  {\|x\|}_{A} = {\|y\|}_{A} = 1, \, Sx \perp_A y\Big\}
\leq \displaystyle{\inf_{\gamma \in \mathbb{C}}}{\|T + \gamma S\|}_{A}.
\end{align*}
Hence
\begin{align}\label{i.2.7.1}
\sup\Big\{|{\langle Tx, y\rangle}_A|; \,  {\|x\|}_{A} = {\|y\|}_{A} = 1, \, Sx \perp_A y\Big\}
\leq d_A(T, \mathbb{C}S).
\end{align}
On the other hand, by Lemma \ref{L.3.1}, there exists
$\zeta_0 \in \mathbb{C}$ such that
$d_A(T, \mathbb{C}S) = {\|T + \zeta_0 S\|}_{A}$.
We assume that $\zeta_0 = 0$ (otherwise we just replace $T$ by $T + \zeta_0 S$).
Thus $d_A(T, \mathbb{C}S) = {\|T\|}_{A}$,
or equivalently, $T \perp^B_A S$.
Then, by the equivalence (i)$\Leftrightarrow$(iii) in Theorem \ref{T.2.2},
there exists a sequence of $A$-unit vectors $\{x_n\}$ in $\mathcal{H}$ such that
$\displaystyle{\lim_{n\rightarrow +\infty}}{\|Tx_n\|}_A = {\|T\|}_A$
and $\displaystyle{\lim_{n\rightarrow +\infty}}{\langle Tx_n, Sx_n\rangle}_A = 0$.
Now, let $Tx_n = \alpha_n Sx_n + \beta_n y_n$
with $Sx_n \perp_A y_n$, ${\|y_n\|}_{A} = 1$, and $\alpha_n, \beta_n\in \mathbb{C}$.
Then we have
\begin{align*}
d^2_A(T, \mathbb{C}S)&= {\|T\|}^2_A
= \displaystyle{\lim_{n\rightarrow +\infty}}{\|Tx_n\|}^2_A
\\& = \displaystyle{\lim_{n\rightarrow +\infty}}{\Big\langle \alpha_n Sx_n + \beta_n y_n, \alpha_n Sx_n + \beta_n y_n\Big\rangle}_A
\\& = \displaystyle{\lim_{n\rightarrow +\infty}}{\langle \alpha_n Sx_n , \alpha_n Sx_n\rangle}_A + |\beta_n|^2
\\& = \displaystyle{\lim_{n\rightarrow +\infty}}{\langle Tx_n - \beta_n y_n, \alpha_n Sx_n\rangle}_A + |\beta_n|^2
\\& = \displaystyle{\lim_{n\rightarrow +\infty}}\alpha_n {\langle Tx_n, Sx_n\rangle}_A -
\overline{\alpha_n}\beta_n {\langle y_n, Sx_n\rangle}_A + |\beta_n|^2
= \displaystyle{\lim_{n\rightarrow +\infty}}|\beta_n|^2.
\end{align*}
Consequently, we obtain
\begin{align*}
d_A(T, \mathbb{C}S) &
= \displaystyle{\lim_{n\rightarrow +\infty}}|\beta_n|
= \displaystyle{\lim_{n\rightarrow +\infty}}|{\langle \beta_n y_n, y_n\rangle}_A|
\\& = \displaystyle{\lim_{n\rightarrow +\infty}}|{\langle Tx_n - \alpha_n Sx_n, y_n\rangle}_A|
= \displaystyle{\lim_{n\rightarrow +\infty}}|{\langle Tx_n, y_n\rangle}_A|
\\& \leq \sup\Big\{|{\langle Tx, y\rangle}_A|; \,  {\|x\|}_{A} = {\|y\|}_{A} = 1, \, Sx \perp_A y\Big\},
\end{align*}
whence
\begin{align}\label{i.2.7.2}
d_A(T, \mathbb{C}S)
\leq \sup\Big\{|{\langle Tx, y\rangle}_A|; \,  {\|x\|}_{A} = {\|y\|}_{A} = 1, \, Sx \perp_A y\Big\}.
\end{align}
From (\ref{i.2.7.1}) and (\ref{i.2.7.2}), we conclude that
\begin{align*}
d_A(T, \mathbb{C}S) =
\sup\Big\{|{\langle Tx, y\rangle}_A|; \,  {\|x\|}_{A} = {\|y\|}_{A} = 1, \, Sx\perp_A y\Big\}.
\end{align*}
\end{proof}
For $T\in\mathbb{B}(\mathcal{H})$, Fujii and Nakamoto in \cite{Fu.Na} proved that $d_A(T, \mathbb{C}I)$
can be written in the following form:
\begin{align}\label{I.2.7.10}
d(T, \mathbb{C}I) = \Big(\displaystyle{\sup_{\|x\| = 1}}\big(\|Tx\|^2 - |\langle Tx, x\rangle|^2\big)\Big)^{1/2}
= \displaystyle{\sup_{\|x\| = 1}}\big\|Tx - \langle Tx, x\rangle x\big\|,
\end{align}
which shows that $d_A(T, \mathbb{C}I)$ is the supremum over the lengths of all perpendiculars from $Tx$ to $x$,
where $x$ passes over the set of unit vectors.
In the following theorem, for $T, S\in\mathbb{B}_{A^{1/2}}(\mathcal{H})$, we show that
$d_A(T, \mathbb{C}S)$ can also be expressed in the  form generalizing of (\ref{I.2.7.10}).
\begin{theorem}\label{T.2.8}
Let $T, S\in\mathbb{B}_{A^{1/2}}(\mathcal{H})$. Then
\begin{align*}
d^2_A(T, \mathbb{C}S)=
\displaystyle{\sup_{{\|x\|}_A = 1}}\Phi^{(T, S)}_A(x),
\end{align*}
where
\begin{align*}
\Phi^{(T, S)}_A(x)=\begin{cases}
{\|Tx\|}^2_A - \frac{|{\langle Tx, Sx\rangle}_A|^2}{{\|Sx\|}^2_A} &\text{if\, ${\|Sx\|}_A\neq0$}\\
{\|Tx\|}^2_A       &\text{if\, ${\|Sx\|}_A=0$}.
\end{cases}
\end{align*}
\end{theorem}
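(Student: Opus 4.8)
The plan is to read $\Phi^{(T,S)}_A(x)$ as the squared $A$-distance from $Tx$ to the subspace $\mathbb{C}Sx$. So the first step is to record the identity
\[
\Phi^{(T,S)}_A(x) = \inf_{\gamma\in\mathbb{C}} {\|(T+\gamma S)x\|}^2_A \qquad ({\|x\|}_A = 1).
\]
Indeed, expanding ${\|(T+\gamma S)x\|}^2_A = {\|Tx\|}^2_A + 2\,\mathrm{Re}\big(\overline{\gamma}\,{\langle Tx,Sx\rangle}_A\big) + |\gamma|^2{\|Sx\|}^2_A$ and completing the square in $\gamma$: when ${\|Sx\|}_A\neq 0$ the minimum is attained at $\gamma = -{\langle Tx,Sx\rangle}_A/{\|Sx\|}^2_A$ and equals ${\|Tx\|}^2_A - |{\langle Tx,Sx\rangle}_A|^2/{\|Sx\|}^2_A$; when ${\|Sx\|}_A = 0$, the Cauchy--Schwarz inequality for the seminorm ${\|\cdot\|}_A$ forces ${\langle Tx,Sx\rangle}_A = 0$, so the expression is constantly ${\|Tx\|}^2_A$. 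Either way this matches the two-branch definition of $\Phi^{(T,S)}_A(x)$.

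Granting this, one inequality is immediate. For a fixed $A$-unit vector $x$ and every $\gamma$, ${\|(T+\gamma S)x\|}_A \le {\|T+\gamma S\|}_A$, hence $\Phi^{(T,S)}_A(x) = \inf_\gamma {\|(T+\gamma S)x\|}^2_A \le \inf_\gamma {\|T+\gamma S\|}^2_A = d^2_A(T,\mathbb{C}S)$, and taking the supremum over $x$ gives $\sup_{{\|x\|}_A=1}\Phi^{(T,S)}_A(x) \le d^2_A(T,\mathbb{C}S)$. For the reverse inequality I would use Theorem~\ref{T.3.3}, which says $d_A(T,\mathbb{C}S) = \sup\{|{\langle Tx,y\rangle}_A| : {\|x\|}_A = {\|y\|}_A = 1,\ Sx\perp_A y\}$. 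Fix $x,y$ admissible there and split $Tx = \mu Sx + w$ with $w\perp_A Sx$: take $\mu = {\langle Tx,Sx\rangle}_A/{\|Sx\|}^2_A$ if ${\|Sx\|}_A\neq 0$, and $\mu = 0$, $w = Tx$ otherwise (legitimate since then ${\langle Tx,Sx\rangle}_A = 0$). A short Pythagorean computation then gives ${\|w\|}^2_A = \Phi^{(T,S)}_A(x)$ in both cases. Since $Sx\perp_A y$ we get ${\langle \mu Sx,y\rangle}_A = 0$, so ${\langle Tx,y\rangle}_A = {\langle w,y\rangle}_A$, and Cauchy--Schwarz yields
\[
|{\langle Tx,y\rangle}_A| = |{\langle w,y\rangle}_A| \le {\|w\|}_A{\|y\|}_A = \big(\Phi^{(T,S)}_A(x)\big)^{1/2} \le \Big(\sup_{{\|x'\|}_A = 1}\Phi^{(T,S)}_A(x')\Big)^{1/2}.
\]
Taking the supremum over admissible $x,y$ gives $d_A(T,\mathbb{C}S) \le \big(\sup_{{\|x\|}_A=1}\Phi^{(T,S)}_A(x)\big)^{1/2}$, and combining with the first inequality finishes the proof.

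Once Theorem~\ref{T.3.3} is available there is no serious obstacle; the only step demanding attention is the degenerate case ${\|Sx\|}_A = 0$, where one must check that the optimal-coefficient splitting $Tx = \mu Sx + w$ still behaves correctly and that the identity ${\|w\|}^2_A = \Phi^{(T,S)}_A(x)$ persists — this is precisely what the second branch of the definition of $\Phi^{(T,S)}_A$ is built to absorb, and it is where the semi-Hilbertian setting departs from the classical one. As an alternative, one can bypass Theorem~\ref{T.3.3} and argue straight from Theorem~\ref{T.2.2}: reduce via Lemma~\ref{L.3.1} to the case $T\perp^B_A S$, pick $A$-unit vectors $x_n$ with ${\|Tx_n\|}_A\to{\|T\|}_A$ and ${\langle Tx_n,Sx_n\rangle}_A\to 0$, write $Tx_n = \alpha_n Sx_n + \beta_n y_n$ with $Sx_n\perp_A y_n$ and ${\|y_n\|}_A = 1$, observe $\Phi^{(T,S)}_A(x_n) = |\beta_n|^2$, and conclude $|\beta_n|^2 \to d^2_A(T,\mathbb{C}S)$ exactly as in the proof of Theorem~\ref{T.3.3}; the route through Theorem~\ref{T.3.3} is a bit shorter.
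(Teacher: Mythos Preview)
Your proof is correct and follows essentially the same route as the paper: both arguments establish $\Phi^{(T,S)}_A(x)\le{\|T+\gamma S\|}^2_A$ for all $\gamma$ to get the upper bound, and both obtain the lower bound by invoking Theorem~\ref{T.3.3} and projecting $Tx$ onto $\mathbb{C}Sx$ so that Cauchy--Schwarz applies to the orthogonal remainder. Your explicit identification $\Phi^{(T,S)}_A(x)=\inf_{\gamma}{\|(T+\gamma S)x\|}^2_A$ is a clean way to package the upper bound (the paper instead verifies directly that ${\|Tx+\gamma Sx\|}^2_A-|{\langle Tx+\gamma Sx,Sx\rangle}_A|^2/{\|Sx\|}^2_A$ is independent of $\gamma$), but the underlying computation is the same.
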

\begin{proof}
For every $\gamma \in \mathbb{C}$ and every $A$-unit vector $x\in \mathcal{H}$
such that ${\|Sx\|}_A\neq0$, we have
\begin{align*}
{\|Tx + \gamma Sx\|}^2_A &- \frac{|{\langle Tx + \gamma Sx, Sx\rangle}_A|^2}{{\|Sx\|}_A^2}
\\&= {\|Tx\|}^2_A + |\gamma|^2{\|Sx\|}^2_A + 2\mbox{Re}{\langle Tx, \gamma Sx\rangle}_A
\\& \qquad \qquad - \frac{|{\langle Tx, Sx\rangle}_A|^2 + |\gamma|^2{\|Sx\|}^4_A
+ 2{\|Sx\|}^2_A\mbox{Re}{\langle Tx, \gamma Sx\rangle}_A}{{\|Sx\|}^2_A}
\\& = {\|Tx\|}^2_A - \frac{|{\langle Tx, Sx\rangle}_A|^2}{{\|Sx\|}^2_A}.
\end{align*}
Thus
\begin{align*}
\Phi^{(T, S)}_A(x) &= {\|Tx + \gamma Sx\|}^2_A - \frac{|{\langle Tx + \gamma Sx, Sx\rangle}_A|^2}{{\|Sx\|}^2_A}
\\&\leq {\|Tx + \gamma Sx\|}^2_A \leq {\|T + \gamma S\|}^2_A.
\end{align*}
Also, in the case ${\|Sx\|}_A=0$ we have
\begin{align*}
\Phi^{(T, S)}_A(x) = {\|Tx\|}^2_A \leq \big({\|Tx + \gamma Sx\|}_A + {\|\gamma Sx\|}_A\big)^2
= {\|Tx + \gamma Sx\|}^2_A\leq {\|T + \gamma S\|}^2_A.
\end{align*}
Hence we obtain $\Phi^{(T, S)}_A(x) \leq {\|T + \gamma S\|}^2_A$ for every $A$-unit vector $x\in \mathcal{H}$
and every $\gamma \in \mathbb{C}$. Therefore,
$\displaystyle{\sup_{{\|x\|}_A = 1}}\Phi^{(T, S)}_A(x) \leq {\|T + \gamma S\|}^2_A$ for every $\gamma \in \mathbb{C}$
and consequently,
\begin{align*}
\displaystyle{\sup_{{\|x\|}_A = 1}}\Phi^{(T, S)}_A(x) \leq \displaystyle{\inf_{\gamma \in \mathbb{C}}}{\|T + \gamma S\|}^2_A.
\end{align*}
Thus
\begin{align}\label{i.28.1}
\displaystyle{\sup_{{\|x\|}_A = 1}}\Phi^{(T, S)}_A(x) \leq d^2_A(T, \mathbb{C}S).
\end{align}
Now, take $A$-unit vectors $x, y \in \mathcal{H}$ such that $Sx \perp_A y$.
If ${\|Sx\|}_A=0$, then
\begin{align*}
|{\langle Tx, y\rangle}_A|^2 \leq {\|Tx\|}^2_A{\|y\|}^2_A = \Phi^{(T, S)}_A(x) \leq \displaystyle{\sup_{{\|x\|}_A = 1}}\Phi^{(T, S)}_A(x).
\end{align*}
If ${\|Sx\|}_A\neq0$, then
\begin{align*}
|{\langle Tx, y\rangle}_A|^2 &
= \left|{\Big\langle Tx - \frac{{\langle Tx, Sx\rangle}_A}{{\|Sx\|}^2_A}Sx, y\Big\rangle}_A\right|^2
\\& \leq {\Big\langle Tx - \frac{{\langle Tx, Sx\rangle}_A}{{\|Sx\|}^2_A}Sx, Tx - \frac{{\langle Tx, Sx\rangle}_A}{{\|Sx\|}^2_A}Sx\Big\rangle}_A
\\& = {\|Tx\|}^2_A - \frac{|{\langle Tx, Sx\rangle}_A|^2}{{\|Sx\|}^2_A}
= \Phi^{(T, S)}_A(x) \leq \displaystyle{\sup_{{\|x\|}_A = 1}}\Phi^{(T, S)}_A(x).
\end{align*}
So, we conclude that $|{\langle Tx, y\rangle}_A|^2 \leq \displaystyle{\sup_{{\|x\|}_A = 1}}\Phi^{(T, S)}_A(x)$
for all $A$-unit vectors $x, y \in \mathcal{H}$ such that $Sx \perp_A y$.
Therefore,  Theorem \ref{T.3.3} implies that
\begin{align}\label{i.28.2}
d^2_A(T, \mathbb{C}S) \leq \displaystyle{\sup_{{\|x\|}_A = 1}}\Phi^{(T, S)}_A(x).
\end{align}
Now, the result follows from (\ref{i.28.1}) and (\ref{i.28.2}).
\end{proof}
We close this paper with the following Inf-sup equality in semi-Hilbertian spaces.
\begin{theorem}
Let $T, S\in\mathbb{B}_{A^{1/2}}(\mathcal{H})$. Then
\begin{align*}
\displaystyle{\inf_{\gamma \in \mathbb{C}}}\,\displaystyle{\sup_{{\|x\|}_A = 1}}{\|(T + \gamma S)x\|}^2_A =
\displaystyle{\sup_{{\|x\|}_A = 1}}\,\displaystyle{\inf_{\gamma \in \mathbb{C}}}{\|(T + \gamma S)x\|}^2_A.
\end{align*}
\end{theorem}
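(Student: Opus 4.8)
The plan is to recognize that both sides of the asserted identity equal $d^2_A(T,\mathbb{C}S)$, with the genuine content being supplied by Theorem~\ref{T.2.8}.

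First I would deal with the left-hand side. For each fixed $\gamma\in\mathbb{C}$ the operator $T+\gamma S$ lies in $\mathbb{B}_{A^{1/2}}(\mathcal{H})$, so by the very definition of the seminorm ${\|\cdot\|}_A$ on this algebra,
\begin{align*}
\displaystyle{\sup_{{\|x\|}_A = 1}}{\|(T + \gamma S)x\|}^2_A = {\|T + \gamma S\|}^2_A .
\end{align*}
Taking the infimum over $\gamma$ yields
\begin{align*}
\displaystyle{\inf_{\gamma \in \mathbb{C}}}\,\displaystyle{\sup_{{\|x\|}_A = 1}}{\|(T + \gamma S)x\|}^2_A
= \displaystyle{\inf_{\gamma \in \mathbb{C}}}{\|T + \gamma S\|}^2_A = d^2_A(T, \mathbb{C}S) .
\end{align*}

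Next I would compute, for a fixed $A$-unit vector $x\in\mathcal{H}$, the inner infimum appearing on the right-hand side. Expanding the square gives
\begin{align*}
{\|(T + \gamma S)x\|}^2_A = {\|Tx\|}^2_A + 2\mbox{Re}{\langle Tx, \gamma Sx\rangle}_A + |\gamma|^2{\|Sx\|}^2_A ,
\end{align*}
which, for fixed $x$, is a real quadratic function of the complex variable $\gamma$. If ${\|Sx\|}_A\neq0$, this function attains its minimum over $\gamma\in\mathbb{C}$ at $\gamma = -{\langle Tx, Sx\rangle}_A/{\|Sx\|}^2_A$, where its value equals ${\|Tx\|}^2_A - |{\langle Tx, Sx\rangle}_A|^2/{\|Sx\|}^2_A$. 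If ${\|Sx\|}_A=0$, then the Cauchy--Schwarz inequality forces ${\langle Tx, Sx\rangle}_A = 0$, so ${\|(T + \gamma S)x\|}^2_A = {\|Tx\|}^2_A$ for every $\gamma$, and the infimum is ${\|Tx\|}^2_A$. In either case $\displaystyle{\inf_{\gamma\in\mathbb{C}}}{\|(T + \gamma S)x\|}^2_A = \Phi^{(T, S)}_A(x)$, and therefore
\begin{align*}
\displaystyle{\sup_{{\|x\|}_A = 1}}\,\displaystyle{\inf_{\gamma \in \mathbb{C}}}{\|(T + \gamma S)x\|}^2_A
= \displaystyle{\sup_{{\|x\|}_A = 1}}\Phi^{(T, S)}_A(x) .
\end{align*}

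Finally, Theorem~\ref{T.2.8} states precisely that $d^2_A(T, \mathbb{C}S) = \displaystyle{\sup_{{\|x\|}_A = 1}}\Phi^{(T, S)}_A(x)$, so the two displayed expressions coincide, which is the claimed equality. I expect no real obstacle here: the inf--sup identity is essentially a repackaging of Theorem~\ref{T.2.8}, so that the substantive work --- the minimax phenomenon itself --- has already been done there, resting ultimately on the Bhatia--\v{S}emrl-type characterization of Theorem~\ref{T.2.2} and the distance formula of Theorem~\ref{T.3.3}. The one place demanding a moment's attention is the degenerate case ${\|Sx\|}_A = 0$ in the pointwise minimization over $\gamma$, which is settled by a single application of the Cauchy--Schwarz inequality.
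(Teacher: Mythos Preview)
Your argument is correct and follows essentially the same route as the paper: identify the left-hand side with $d^2_A(T,\mathbb{C}S)$ by definition of ${\|\cdot\|}_A$, compute the pointwise infimum over $\gamma$ to obtain $\Phi^{(T,S)}_A(x)$ (distinguishing the cases ${\|Sx\|}_A=0$ and ${\|Sx\|}_A\neq0$), and then invoke Theorem~\ref{T.2.8}. The only cosmetic difference is that in the degenerate case ${\|Sx\|}_A=0$ the paper uses the triangle inequality while you use Cauchy--Schwarz to kill the cross term; both are fine.
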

\begin{proof}
Let $x\in \mathcal{H}$ with ${\|x\|}_A = 1$. If ${\|Sx\|}_A = 0$, then
\begin{align*}
{\|(T + \gamma S)x\|}_A \geq {\|Tx\|}_A  - |\gamma|{\|Sx\|}_A = {\|Tx\|}_A,
\end{align*}
for all $\gamma \in \mathbb{C}$. Thus
\begin{align*}
{\|Tx\|}^2_A \geq \displaystyle{\inf_{\gamma \in \mathbb{C}}}{\|(T + \gamma S)x\|}^2_A \geq {\|Tx\|}^2_A,
\end{align*}
whence $\displaystyle{\inf_{\gamma \in \mathbb{C}}}{\|(T + \gamma S)x\|}^2_A = {\|Tx\|}^2_A$.
Hence $\displaystyle{\inf_{\gamma \in \mathbb{C}}}{\|(T + \gamma S)x\|}^2_A = \Phi^{(T, S)}_A(x)$.

If ${\|Sx\|}_A \neq 0$, then simple computations show that
\begin{align*}
{\|(T + \gamma S)x\|}^2_A = {\|Sx\|}^2_A\Big|\frac{{\langle Tx, Sx\rangle}_A}{{\|Sx\|}^2_A} + \gamma\Big|^2
+ {\|Tx\|}^2_A - \frac{|{\langle Tx, Sx\rangle}_A|^2}{{\|Sx\|}^2_A}.
\end{align*}
Thus ${\|(T + \gamma S)x\|}^2_A$ achieves its minimum at $-\frac{{\langle Tx, Sx\rangle}_A}{{\|Sx\|}^2_A}$
and the minimum value is
${\|Tx\|}^2_A - \frac{|{\langle Tx, Sx\rangle}_A|^2}{{\|Sx\|}^2_A}$.
Hence $\displaystyle{\inf_{\gamma \in \mathbb{C}}}{\|(T + \gamma S)x\|}^2_A = \Phi^{(T, S)}_A(x)$
for every $A$-unit vector $x\in \mathcal{H}$.
From this, by Theorem \ref{T.2.8}, we conclude that
\begin{align*}
\displaystyle{\sup_{{\|x\|}_A = 1}}\,\displaystyle{\inf_{\gamma \in \mathbb{C}}}{\|(T + \gamma S)x\|}^2_A
&= \displaystyle{\sup_{{\|x\|}_A = 1}}\Phi^{(T, S)}_A(x)
\\&= d^2_A(T, \mathbb{C}S)
\\&= \displaystyle{\inf_{\gamma \in \mathbb{C}}}{\|T + \gamma S\|}^2_A
= \displaystyle{\inf_{\gamma \in \mathbb{C}}} \,\displaystyle{\sup_{{\|x\|}_A = 1}}{\|(T + \gamma S)x\|}^2_A.
\end{align*}
\end{proof}
{\bf Acknowledgments.} The author would like to thank the referees for their valuable
comments which helped to improve the paper.
This work was supported by a grant from Shanghai Municipal Science and Technology Commission (18590745200).

\bibliographystyle{amsplain}

\end{document}